\theoremstyle{plain}
\newtheorem{theorem}{Theorem}
\numberwithin{theorem}{section}
\newtheorem{corollary}[theorem]{Corollary}
\newtheorem*{corollary*}{Corollary}
\newtheorem*{Example*}{Example}
\newtheorem{lemma}[theorem]{Lemma}
\newtheorem{proposition}[theorem]{Proposition}
\newtheorem{conjecture}[theorem]{Conjecture}
\theoremstyle{definition}
\newtheorem*{def*}{Definition}
\newtheorem*{theorem*}{Theorem}
\newtheorem*{definition*}{Definition}
\theoremstyle{remark}
\newtheorem*{remark}{Remark}
\newcommand{\bracket}[1]{\left( #1 \right)}
\newcommand{\floor}[1]{\left\lfloor #1 \right\rfloor}
\newcommand{\modulo}[3]{#1\equiv#2\ \bracket{\mathrm{mod}\ #3}}
\newcommand{\notmodulo}[3]{#1\not\equiv#2\ \bracket{\mathrm{mod}\ #3}}
\newcommand{\qPochhammer}[3]{(#1;#2)_\infty^{#3}}
\newcommand{\Sh}[1]{\mathrm{Sh}_{#1}}
\numberwithin{equation}{section}
\title{\textbf{Arithmetic properties of overpartitions}}
\author{QI-YANG ZHENG}
\date{} % Don't show current time
\address{Department of Mathematics, Sun Yat-sen University(Zhuhai Campus), Zhuhai}
\email{zhengqy29@mail2.sysu.edu.cn}
\begin{document}

\begin{abstract}
    The primary focus of this paper is overpartitions, a type of partition that plays a significant role in $q$-series theory. In 2006, Treneer discovered an explicit infinite family of congruences of overpartitions modulo $5$. In our research, we have identified explicit infinite families of congruences of overpartitions modulo $3,7,11$. This work reveals the connection between overpartitions and half-integral modular forms.
\end{abstract}

% \tableofcontents
\maketitle

~

\section{Introduction}

An overpartition of $n$ is an ordered sequence of non-increasing positive integers that sum to $n$, where the first occurrence of each integer may be overlined. For example, $3$ has eight overpartitions:
$$3,\bar3,2+1,\bar2+1,2+\bar1,\bar2+\bar1,1+1+1,\bar1+1+1.$$

\noindent
Obtaining the generating function of the overpartition function is straightforward.
\begin{equation}
    \abovedisplayskip=1em
    \notag
    \sum_{n=0}^\infty \bar p(n)q^n = \prod_{n=1}^\infty \frac{1+q^n}{1-q^n}.
    \belowdisplayskip=1em
\end{equation}

\noindent
The results in \cite{corteel2004overpartitions} show that several finite products appearing in $q$-series possess natural interpretations in terms of overpartitions. Furthermore, overpartitions have been found to play a central role in bijective proofs of Ramanujan’s $_1\psi_1$ summation and the $q$-Gauss summation.

Our paper shows that overpartitions yield results of another type. First, we introduce Ramanujan-type congruences. Let $p(n)$ denote the number of unrestricted partitions of $n$. Ramanujan discovered that,
\begin{equation}
    \abovedisplayskip=1em
    \notag
    \begin{aligned}
        {p(5n+4)}&\equiv{0}\ (\mathrm{mod}\ {5}), \\
        {p(7n+5)}&\equiv{0}\ (\mathrm{mod}\ {7}), \\
        {p(11n+6)}&\equiv{0}\ (\mathrm{mod}\ {11)}.
    \end{aligned}
    \belowdisplayskip=1em
\end{equation}

\noindent
Such congruences also appear in the context of overpartitions. In 2006, Treneer discovered an explicit infinite family of congruences modulo $5$ \cite[Prop. 1.4]{treneer2006congruences}.

\begin{theorem}[Treneer]
    Let $\modulo{Q}{4}{5}$ be prime. Then
    \begin{equation}
        \abovedisplayskip=2pt
        \notag
        \modulo{\bar p(5Q^3n)}{0}{5}
        \belowdisplayskip=2pt
    \end{equation}

    \noindent
    for all $n$ coprime to $Q$.
\end{theorem}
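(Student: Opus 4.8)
The plan is to realize the overpartition generating function as a weakly holomorphic modular form of half-integral weight, reduce it modulo $5$ to a form of positive weight, and then exploit the action of a Hecke operator $T(Q^2)$ together with the congruence condition on $Q$.

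First I would record that, in terms of the Dedekind eta function $\eta(\tau) = q^{1/24}\prod_{n\geq 1}(1-q^n)$ with $q = e^{2\pi i \tau}$, the identity $\prod_{n\geq1}\frac{1+q^n}{1-q^n} = \frac{(q^2;q^2)_\infty}{(q;q)_\infty^2}$ gives
\begin{equation}
\notag
\sum_{n=0}^\infty \bar p(n) q^n = \frac{\eta(2\tau)}{\eta(\tau)^2},
\end{equation}
a weakly holomorphic modular form of weight $-\tfrac12$. The obstruction to working directly is the negative weight and the poles at the cusps, so next I would use the Frobenius congruence $\eta(\tau)^5 \equiv \eta(5\tau) \pmod 5$ to write
\begin{equation}
\notag
\sum_{n=0}^\infty \bar p(n) q^n \equiv F(\tau) := \frac{\eta(2\tau)\,\eta(\tau)^3}{\eta(5\tau)} \pmod 5,
\end{equation}
which has weight $\tfrac32$ on $\Gamma_0(40)$ with a quadratic nebentypus. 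I would then examine its orders at the cusps; after applying $U_5$ (both to isolate the subprogression $5n$ and to clear the pole coming from $1/\eta(5\tau)$), the resulting series should be congruent mod $5$ to a genuine cusp form $h$ of weight $\tfrac32$.

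With $h = \sum_m c(m) q^m$ where $c(m) \equiv \bar p(5m) \pmod 5$, the core computation is the action of the half-integral weight Hecke operator $T(Q^2)$; suppressing the nebentypus, for $\lambda = 1$ one has
\begin{equation}
\notag
h \mid T(Q^2) = \sum_m \left( c(Q^2 m) + \left(\tfrac{-m}{Q}\right) c(m) + Q\, c(m/Q^2) \right) q^m.
\end{equation}
Extracting the coefficient of $q^{Qn}$ with $\gcd(n,Q)=1$, the Legendre-symbol term vanishes because $Q \mid Qn$ and the last term vanishes because $Q \nmid n$; what survives is exactly $c(Q^3 n) \equiv \bar p(5Q^3 n) \pmod 5$. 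Hence it suffices to prove that $h \mid T(Q^2) \equiv 0 \pmod 5$ whenever $\modulo{Q}{4}{5}$.

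The hard part is this last vanishing. My plan is to argue that on the finite-dimensional space of weight-$\tfrac32$ forms mod $5$ in which $h$ lives, every Hecke eigensystem has \emph{Eisenstein-type} eigenvalues, so that $T(Q^2)$ acts by a scalar $\lambda_Q$ with $\lambda_Q \equiv 1 + Q^{2\lambda-1} = 1+Q \pmod 5$ (the eigenvalue of the corresponding weight-$2$ Shimura lift); since $Q \equiv 4 \equiv -1 \pmod 5$ this scalar is $\equiv 0$, forcing $h \mid T(Q^2) \equiv 0$ and hence all of its coefficients — in particular $\bar p(5Q^3 n)$ — to vanish mod $5$. Making this precise is where the real work lies: one must control the mod-$5$ Hecke module (via the Shimura lift to weight $2$, or by a direct dimension and eigenvalue computation) and, separately, justify the passage from the weakly holomorphic $F$ to a bona fide cusp form so that the Hecke theory applies. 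I expect the cusp-behaviour bookkeeping and the identification of the eigenvalue $\lambda_Q$ to be the main obstacles, whereas the coefficient extraction and the Frobenius reduction are routine.
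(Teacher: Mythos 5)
You have the right skeleton, and it is the same one the paper uses for its own mod $3$, $7$, $11$ theorems (the paper does not reprove Treneer's mod $5$ result; it only cites it): reduce the generating function modulo $5$ to a half-integral weight form of small positive weight, act by $T(Q^2)$, and read off $\bar p(5Q^3n)$ from the coefficient of $q^{Qn}$. The Frobenius step $\eta(\tau)^5\equiv\eta(5\tau)\pmod 5$ and the coefficient extraction are fine.

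The genuine gap is exactly the step you flag as ``the real work'': the assertion that every Hecke eigensystem on the ambient weight-$\tfrac32$ space mod $5$ is of Eisenstein type with eigenvalue $1+Q$. In the generality in which you invoke it this is not provable --- a space $M_{3/2}(\Gamma_0(4N))$ of large enough level contains cusp forms whose eigensystems mod $5$ need not be Eisenstein, and you give no mechanism for excluding them. What actually closes the argument (in Treneer's proof, and in the paper's parallel mod-$3$ argument where $\sum\bar p(3n)q^n\equiv\phi^5(-q)\pmod 3$) is an \emph{explicit identification} of the reduction: via the $5$-dissection or a Sturm-bound comparison one shows
\begin{equation}
    \notag
    \sum_{n=0}^\infty\bar p(5n)q^n\equiv\phi^3(-q)\pmod 5,\qquad \phi^3(q)=\sum_{n=0}^\infty r_3(n)q^n\in M_{\frac32}(\Gamma_0(4)).
\end{equation}
By Theorem \ref{basis of half-integral weight and level 4} with $k=1$ this space is one-dimensional, so $T(Q^2)$ acts on it by a scalar, and comparing constant terms (or citing the $s=3$ analogue of \eqref{r_5 is eigenform}, $r_3(Q^2n)+\bracket{\frac{-n}{Q}}r_3(n)+Qr_3(n/Q^2)=(Q+1)r_3(n)$) gives the eigenvalue $Q+1\equiv0\pmod5$ for $\modulo{Q}{4}{5}$. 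Two smaller corrections: the reduction lands on a \emph{non-cuspidal} form ($\phi^3$ has constant term $1$), so your insistence on producing a genuine cusp form $h$ is both unachievable and unnecessary, and the detour through the Shimura lift at $\lambda=1$ (where the lift is delicate anyway) is moot; and the level and character bookkeeping for $\eta(2\tau)\eta(\tau)^3/\eta(5\tau)$ needs care, since the integral-weight criteria of Theorem \ref{eta-quotient} do not apply verbatim in half-integral weight.
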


\noindent
It is natural to ask whether $5$ is the only such special prime. Surprisingly, a similar phenomenon occurs for moduli $3, 7$, and $11$. In the case of modulus $3$, we discover two infinite families of congruences.

\begin{theorem}
    \label{Mod 3 congruences}
    Let $\modulo{Q}{5}{6}$ be prime. Then
    \begin{enumerate}
        \item $\modulo{\bar p(3Q^3n)}{0}{3}$ for all $n$ coprime to $Q$.
        \item $\modulo{\bar p(Q^3n)}{0}{3}$ for all $n$ coprime to $Q$, provided $\modulo{n}{-1}{3}$.
    \end{enumerate}
\end{theorem}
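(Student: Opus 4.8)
The plan is to realize the overpartition generating function as a weakly holomorphic modular form of half-integral weight and then run Treneer's Hecke-operator argument at the prime $3$. Starting from the product formula one records
\begin{equation}
\notag
\sum_{n\ge0}\bar p(n)q^n=\prod_{n\ge1}\frac{1-q^{2n}}{(1-q^n)^2}=\frac{\eta(2\tau)}{\eta(\tau)^2}=\frac{1}{\varphi(-q)},\qquad \varphi(q):=\sum_{n\in\bbZ}q^{n^2},
\end{equation}
so that $\sum\bar p(n)q^n$ is a weakly holomorphic form of weight $-1/2$ on a suitable $\Gamma_0(N)$. To gain traction modulo $3$ I would invoke the Frobenius congruence $\varphi(-q)^3\equiv\varphi(-q^3)\pmod 3$ (equivalently $\eta(\tau)^3\equiv\eta(3\tau)$) to write
\begin{equation}
\notag
\sum_{n\ge0}\bar p(n)q^n\equiv\frac{\varphi(-q)^2}{\varphi(-q^3)}\pmod 3,
\end{equation}
in which the holomorphic weight-$1$ theta series $\varphi(-q)^2$ carries the arithmetic and the factor $1/\varphi(-q^3)=\sum_m\bar p(m)q^{3m}$ records how $\bar p$ splits over residues modulo $3$.

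The engine is the Hecke recursion for a half-integral weight eigenform. For a form $g=\sum b(n)q^n$ of weight $\lambda+\tfrac12$ the operator $T_{Q^2}$ acts by
\begin{equation}
\notag
g\mid T_{Q^2}=\sum_n\Bigl(b(Q^2n)+\left(\tfrac{(-1)^\lambda n}{Q}\right)Q^{\lambda-1}b(n)+Q^{2\lambda-1}b(n/Q^2)\Bigr)q^n .
\end{equation}
If $\modulo{g\mid T_{Q^2}}{0}{3}$, then replacing $n$ by $Qn$ with $\gcd(n,Q)=1$ annihilates the last two terms: the symbol $\left(\tfrac{(-1)^\lambda Qn}{Q}\right)$ vanishes and $b(Qn/Q^2)=b(n/Q)=0$. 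What survives is exactly $\modulo{b(Q^3n)}{0}{3}$, which is why the cube $Q^3$ appears in both parts of the statement. Thus everything reduces to producing, from the display above, a genuinely \emph{holomorphic} half-integral weight cusp form $g$ congruent to (a projection of) $\sum\bar p(n)q^n$ modulo $3$, and showing that its $T_{Q^2}$-eigenvalue dies modulo $3$ precisely when $\modulo{Q}{5}{6}$.

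Following Treneer, I would sieve the weakly holomorphic form with the operators $U_3$ and $V_3$ to clear the principal parts at the cusps, landing modulo $3$ in a finite-dimensional space of holomorphic cusp forms of half-integral weight on some $\Gamma_0(N)$ with $N\mid24$. The decisive point is that the reduction is congruent to a Hecke eigenform with complex multiplication by $\mathbb Q(\sqrt{-3})$: such a form has $Q$-th coefficient equal to $0$ whenever $Q$ is inert in $\mathbb Q(\sqrt{-3})$, i.e. whenever $\left(\tfrac{-3}{Q}\right)=-1$, and for an odd prime this is exactly $\modulo{Q}{2}{3}$, equivalently $\modulo{Q}{5}{6}$. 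Hence $\modulo{\lambda_Q}{0}{3}$ and the recursion fires. Finally the two residue projections carried by $1/\varphi(-q^3)$ produce the two families: the coefficients supported on $\modulo{n}{0}{3}$ give part (1) for $\bar p(3Q^3n)$, while those supported on $\modulo{n}{1}{3}$—that is, $\bar p(Q^3n)$ with $\modulo{n}{-1}{3}$—give part (2).

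The hard part is the identification in the previous paragraph: pinning down the exact half-integral weight space after the $U_3/V_3$ sieve and proving that the surviving eigenform is the $\mathbb Q(\sqrt{-3})$-CM form, so that inertness of $Q$ genuinely forces $\modulo{\lambda_Q}{0}{3}$. In particular, although the naive reduction $\varphi(-q)^2/\varphi(-q^3)$ exhibits a theta series with complex multiplication by $\mathbb Q(i)$, the mod-$3$ hypothesis $\modulo{Q}{2}{3}$ is a condition attached to $\mathbb Q(\sqrt{-3})$; one must therefore show that the $\mathbb Q(i)$-piece reorganizes (or is killed by the sieve) and that the correct CM structure modulo $3$ is that of $\mathbb Q(\sqrt{-3})$, and that each residue projection is itself modular. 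Once this structural fact is in place, the remainder is the formal Hecke computation above.
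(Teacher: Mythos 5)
Your skeleton is the right one --- realize the generating function via $\eta(2z)/\eta^2(z)$, reduce modulo $3$ to land in a finite-dimensional space of holomorphic half-integral weight forms, apply $T(Q^2)$, and substitute $n\mapsto Qn$ to isolate $\bar p(\cdot\,Q^3n)$ --- and that endgame matches the paper exactly. But the proposal defers precisely the content of the proof. You write that ``the hard part is the identification'' of the surviving eigenform and then stop; that identification is not a routine structural fact but the entire argument. The paper gets it by first proving an exact $3$-dissection (via the cubic theta functions $a(q),b(q)$ and the Andrews--Hirschhorn--Sellers identities), yielding closed product formulas for $\sum\bar p(3n+j)q^n$; your congruence $\sum\bar p(n)q^n\equiv\varphi(-q)^2/\varphi(-q^3)\pmod 3$ does not by itself separate the residue classes, and the factor $1/\varphi(-q^3)$ is still weakly holomorphic, so the $U_3/V_3$ sieve you invoke has not actually been shown to land anywhere concrete.

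Moreover, the CM mechanism you propose is only half right. For part (1) the relevant form is $\sum\bar p(3n)q^n\equiv\phi^5(-q)\pmod 3$, which lies in $M_{5/2}(\Gamma_0(16))$ and is \emph{not} congruent to a CM cusp form: by Cooper's formula for $r_5(n)$ it is a $T(Q^2)$-eigenform with eigenvalue $Q^3+1$, and the vanishing modulo $3$ comes from $Q^3+1\equiv Q+1\equiv 0\pmod 3$ for $Q\equiv 2\pmod 3$ --- an Eisenstein-type phenomenon, not inertness in $\mathbb Q(\sqrt{-3})$. For part (2) your CM intuition is vindicated, but only after real work: the paper shows $\sum\bar p(3n+1)q^{3n+1}\equiv 2\eta^6(z)\eta^9(2z)\pmod 3$, passes to the Shimura lift in $S_{14}(\Gamma_0(8))$, projects onto $n\equiv\pm1\pmod 6$, and verifies by a Sturm-bound computation (bound $336$) that this projection is congruent modulo $3$ to $\eta^4(6z)$ --- the weight-$2$ CM form for $\mathbb Q(\sqrt{-3})$, supported on $q^{6n+1}$ --- whence the eigenvalue $A_1(Q)\equiv 0\pmod 3$ for $Q\equiv 5\pmod 6$. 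Without producing these two explicit forms and their eigenvalue computations, the argument does not close.
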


\noindent
Note that the two types of congruences mentioned above are completely disjoint. For modulus $7$, we identify three infinite families of congruences.

\begin{theorem}
    \label{Mod 7 congruences}
    Let $\modulo{Q}{3,5,6}{7}$ be prime. Then
    \begin{equation}
        \abovedisplayskip=2pt
        \notag
        \modulo{\bar p(7Q^3n)}{0}{7}
        \belowdisplayskip=2pt
    \end{equation}

    \noindent
    for all $n$ coprime to $Q$.
\end{theorem}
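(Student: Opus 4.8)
The plan is to realize $\sum_n \bar p(n)q^n$ as the expansion of a weight $-1/2$ modular form and then exploit the action of Hecke operators at the prime $Q$, exactly as in the modulus $5$ theorem of Treneer. Writing $\eta$ for the Dedekind eta function, the generating function gives
\begin{equation}
    \notag
    \sum_{n=0}^\infty \bar p(n)q^n = \frac{(q;q)_\infty^2}{(q^2;q^2)_\infty} \cdot \frac{1}{(q;q)_\infty^2}\cdot(q;q)_\infty = \frac{\eta(2\tau)}{\eta(\tau)^2}\, q^{1/12},
\end{equation}
so that $\bar p(n)$ is encoded by a weakly holomorphic modular form of weight $-1/2$ on $\Gamma_0(16)$ with a known character. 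The first step is to pin down this modular form precisely, including its level, character, and the behaviour at the cusps, so that the relevant Hecke operator $T_{Q^2}$ (the half-integral weight Hecke operator acting on weight $-1/2$ forms) is available.

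Next I would compute the action of $T_{Q^2}$ on the Fourier expansion. For a half-integral weight form $\sum a(n)q^n$ of weight $\lambda+1/2$, the operator $T_{Q^2}$ sends $a(n)$ to
\begin{equation}
    \notag
    a(Q^2 n) + \left(\frac{(-1)^\lambda n}{Q}\right) Q^{\lambda-1} a(n) + Q^{2\lambda-1} a(n/Q^2).
\end{equation}
The key arithmetic input is that for weight $-1/2$ one has $\lambda = -1$, so the middle coefficient carries a factor $Q^{\lambda-1}=Q^{-2}$ and the last carries $Q^{2\lambda-1}=Q^{-3}$; after clearing denominators the congruence modulo $7$ collapses many of these terms. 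The plan is to show that, under the hypothesis $\modulo{Q}{3,5,6}{7}$, the eigenvalue of the relevant form under $T_{Q^2}$ is forced to vanish modulo $7$ (equivalently, the form is killed by $T_{Q^2}$ mod $7$), which is where the congruence class of $Q$ mod $7$ enters: these are precisely the residues for which a certain quadratic Gauss-sum / Legendre-symbol factor is incompatible with a nonzero eigenvalue modulo $7$.

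From the vanishing of the Hecke eigenvalue modulo $7$ one extracts the statement that the coefficients supported on $7Q^3 n$ vanish: iterating the $U_Q$ and $V_Q$ decomposition of $T_{Q^2}$ and using that $n$ is coprime to $Q$ kills the $a(n/Q^2)$ term, leaving a two-term recursion that forces $\modulo{\bar p(7Q^3 n)}{0}{7}$. The initial factor of $7$ in the argument reflects a twist by the prime $7$ itself needed to land in the space where Hecke theory is clean modulo $7$; this is analogous to the factor $5$ in Treneer's theorem.

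The hard part, I expect, is verifying that the weight $-1/2$ form genuinely lands in a space on which the half-integral Hecke operators act with the clean formula above, and controlling the cusps so that the relevant form is (after multiplying by a suitable power of $\eta$ or an Eisenstein series to raise the weight) a holomorphic or cuspidal object modulo $7$. In particular, identifying the exact congruence condition $\modulo{Q}{3,5,6}{7}$ requires a careful case analysis of the Legendre symbol $\left(\tfrac{-1}{Q}\right)$ and the value of the quadratic Gauss sum modulo $7$; getting those three residue classes (and no others) to arise is the delicate point, and I would double-check it against small primes $Q$ by direct computation of $\bar p$ before trusting the general argument.
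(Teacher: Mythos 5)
There is a genuine gap, in two places. First, your plan to run Hecke theory directly on the weight $-1/2$ weakly holomorphic form $\eta(2z)/\eta^2(z)$ does not get off the ground: the half-integral weight Hecke operator $T(Q^2)$ with the clean three-term formula is defined on \emph{holomorphic} forms of weight $\lambda+\tfrac12$ with $\lambda\geq 0$, and $\eta(2z)/\eta^2(z)$ has poles at cusps. The paper has to do real work here (Theorem \ref{sum bar p(mn) q^n is a modular form}): one multiplies by a carefully chosen eta-quotient $\eta^a(7z)\eta^b(14z)$, applies $U(7)\equiv T(7)\pmod 7$, and divides off $\eta^8(z)\eta^8(2z)$ to conclude that $\sum\bar p(7n)q^n$ is congruent mod $7$ to a genuine holomorphic form of weight $\tfrac{89}{2}$ on $\Gamma_0(16)$. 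You flag this as ``the hard part'' but supply no mechanism for it, and the factor of $7$ in $\bar p(7Q^3n)$ is not a ``twist'' --- it is the $U(7)$ step itself.

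Second, and more seriously, your proposed explanation of the condition $Q\equiv 3,5,6\pmod 7$ via ``a quadratic Gauss-sum / Legendre-symbol factor'' is wrong. Those three residues are exactly the solutions of $Q^3\equiv -1\pmod 7$, and they arise because the relevant form is a Hecke eigenform with eigenvalue $Q^3+1$. Concretely, the paper uses Sturm's bound to show $\sum(-1)^n\bar p(7n)q^n\equiv \phi^5(q)-2\phi_{1,4}(q)\pmod 7$, a specific element of the two-dimensional space $M_{\frac52}(\Gamma_0(4))$, and then proves $(\phi^5(q)-2\phi_{1,4}(q))\,|\,T(Q^2)=(Q^3+1)(\phi^5(q)-2\phi_{1,4}(q))$. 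Pinning down the second eigenform coefficient requires computing $\phi_{1,4}(Q^2)=16(Q^3-Q+1)$ via sums of four triangular numbers and the divisor convolution $\sum_{i=1}^n\sigma(2i-1)\sigma(2n-2i+1)=\sum_{d\mid n,\ n/d\ \mathrm{odd}}d^3$. None of this is visible in your sketch, and without identifying the eigenform and its eigenvalue there is no route from the hypothesis on $Q$ to the vanishing of $\bar p(7Q^3n)$ mod $7$. The final substitution $n\mapsto Qn$ with $(n,Q)=1$ is the only step you have essentially right.
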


\noindent
Moving on to modulus $11$, we find one infinite family of congruences.

\begin{theorem}
    \label{Mod 11 congruences}
    Let $\modulo{Q}{10}{11}$ be prime. Then
    \begin{equation}
        \abovedisplayskip=2pt
        \notag
        \modulo{\bar p(11Q^3n)}{0}{11}
        \belowdisplayskip=2pt
    \end{equation}

    \noindent
    for all $n$ coprime to $Q$.
\end{theorem}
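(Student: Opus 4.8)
The plan is to realize $\sum_n \bar p(n)q^n$ as a weakly holomorphic modular form of half-integral weight and then run the Hecke-operator argument underlying Treneer's result, with the eigenvalue computation made explicit modulo $11$. First I would record the theta description
\begin{equation}\notag
\sum_{n\geq0}\bar p(n)q^n=\frac{\eta(2\tau)}{\eta(\tau)^2}=\frac{1}{\varphi(-q)},\qquad \varphi(-q)=\sum_{n\in\bbZ}(-1)^nq^{n^2}=\frac{\eta(\tau)^2}{\eta(2\tau)},
\end{equation}
which presents the generating series as a weight $-\tfrac12$ weakly holomorphic form. The Frobenius congruence $\varphi(-q)^{11}\equiv\varphi(-q^{11})\pmod{11}$ gives
\begin{equation}\notag
\sum_{n\geq0}\bar p(n)q^n\equiv\frac{\varphi(-q)^{10}}{\varphi(-q^{11})}\pmod{11},
\end{equation}
and applying $U_{11}$ together with the identity $U_{11}\bracket{f(q^{11})h(q)}=f(q)\,U_{11}\bracket{h(q)}$ yields
\begin{equation}\notag
\varphi(-q)\sum_{n\geq0}\bar p(11n)q^n\equiv U_{11}\bracket{\varphi(-q)^{10}}\pmod{11}.
\end{equation}
The right-hand side is a holomorphic modular form of weight $5$, so this exhibits $\sum_n\bar p(11n)q^n$ as the mod $11$ reduction of a half-integral weight form of weight $\tfrac92$.

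Next I would pass to a genuinely holomorphic form. Writing $g=U_{11}\bracket{\varphi(-q)^{10}}$, the quotient $g/\varphi(-q)$ is holomorphic on the upper half-plane, since $\varphi$ has no zeros there, but it may have poles at the cusps where $\varphi$ vanishes. Following Treneer, I clear these by multiplying by a suitable power of an $\eta$-quotient such as $\eta(\tau)^{11}/\eta(11\tau)$, whose $q$-expansion at $\infty$ is $\equiv1\pmod{11}$ while it vanishes at the offending cusps; this leaves the $q$-expansion unchanged modulo $11$ (at the cost of raising the weight by a multiple of $5$) and yields a holomorphic half-integral weight form $G=\sum_m b(m)q^m$ on $\Gamma_0(176)$ with $\modulo{b(m)}{\bar p(11m)}{11}$. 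The key structural claim I must then establish is that, modulo $11$, $G$ lies in a Hecke-stable space on which the operators $T_{Q^2}$ act through eigenvalues $\lambda_Q$ depending only on $Q\bmod11$. Concretely I expect $G$ to be congruent mod $11$ to a combination of Eisenstein series, whose $T_{Q^2}$-eigenvalues have the shape $\psi_1(Q)+\psi_2(Q)Q^{2\lambda-1}$ for explicit Dirichlet characters $\psi_i$ modulo $11$ (the prime-to-$11$ part of the nebentypus, being a power of a quadratic character, should not affect the eigenvalues modulo $11$); evaluating these should give $\lambda_Q\equiv0\pmod{11}$ exactly when $\modulo{Q}{-1}{11}$.

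Finally I would invoke the Shimura Hecke recursion. For $\gcd(n,Q)=1$ the coefficient of $q^{Qn}$ in $G\,|\,T_{Q^2}$ is exactly $b(Q^3n)$: the cross term carries a Legendre symbol $\bracket{\frac{(-1)^\lambda Qn}{Q}}=0$, and the remaining term $b(n/Q)$ vanishes since $Q\nmid n$. Consequently $\lambda_Q\equiv0\pmod{11}$ forces $\modulo{b(Q^3n)}{0}{11}$, i.e. $\modulo{\bar p(11Q^3n)}{0}{11}$ for all $n$ coprime to $Q$, as claimed. The main obstacle is the middle step: controlling the poles at the cusps when constructing $G$, keeping track of level and character so the half-integral Hecke theory applies, and—most importantly—verifying that modulo $11$ the eigenvalues $\lambda_Q$ are governed by an abelian (Eisenstein) structure depending only on $Q\bmod11$. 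It is precisely this reducibility modulo $11$ that upgrades the positive-proportion conclusion of the general theory to the single explicit congruence class $\modulo{Q}{-1}{11}$.
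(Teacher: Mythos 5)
Your overall framework is the right one---it is essentially Treneer's machinery, which the paper also uses to get off the ground---and your first and last steps are sound: the reduction $\sum\bar p(n)q^n\equiv\phi(-q)^{10}/\phi(-q^{11})\pmod{11}$, the $U(11)$ manipulation, the pole-clearing by powers of an $\eta$-quotient congruent to $1$, and the extraction of $\bar p(11Q^3n)$ from the coefficient of $q^{Qn}$ in $G\,|\,T(Q^2)$ are all correct in outline. But there is a genuine gap at exactly the step you flag as "the main obstacle": you never identify the explicit form to which $G$ is congruent modulo $11$, nor do you give any mechanism for proving that its $T(Q^2)$-eigenvalues are governed by an Eisenstein-type expression in $Q\bmod 11$. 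This is not a routine verification that can be deferred---it is the entire content of the theorem. Indeed, the paper points out that the analogous "expectation" \emph{fails} for every prime $m\geq13$ (e.g.\ $\bar p(13\cdot103^3\cdot3)\equiv12\pmod{13}$), where only a positive-proportion statement survives; so the reducibility modulo $11$ you are hoping for is a special coincidence that must be established by explicit computation, not inferred from the general theory.

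What the paper does at this step is: (i) prove (Theorem \ref{sum bar p(mn) q^n is a modular form}) that $\sum\bar p(11n)q^n$ is the reduction of a genuinely holomorphic form of weight $\tfrac{109}{2}$ on $\Gamma_0(16)$, then twist by $(-1)^n$ to land in level $32$; (ii) guess the candidate $\phi^9(q)-2\phi_{5,4}(q)\in M_{\frac92}(\Gamma_0(4))$, multiply it by $E_{10}^5\equiv1\pmod{11}$ to match weights, and verify the congruence $\sum(-1)^n\bar p(11n)q^n\equiv\phi^9(q)-2\phi_{5,4}(q)\pmod{11}$ by checking coefficients up to the Sturm bound $218$; and (iii) invoke Cooper's explicit matrix for $T(Q^2)$ on the basis $\phi^9,\phi_{5,4},\phi_{1,8}$ of $M_{\frac92}(\Gamma_0(4))$, which shows that $\phi^9(q)-2\phi_{5,4}(q)$ is an exact eigenform with eigenvalue $Q^7+1$ (the unknown quantity $\beta=\theta(p)$ cancels in precisely this combination). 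Only then does $Q\equiv-1\pmod{11}\Rightarrow Q^7+1\equiv0\pmod{11}$ close the argument. To repair your proposal you would need to supply (ii) and (iii) --- a concrete finite computation identifying the eigenform and its eigenvalue --- and you should note that the twist by $(-1)^n$ (i.e.\ passing from $\phi(-q)$ to $\phi(q)$) is what places the form in the low-level space $M_{\frac92}(\Gamma_0(4))$ where such an explicit Hecke action is available.
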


\noindent
Congruences modulo primes $m\geq13$ do exist. However, there seems to be no explicit form for $Q$ that satisfies the condition
\begin{equation}
    \abovedisplayskip=2pt
    \notag
    \modulo{\bar p(mQ^3n)}{0}{m}
    \belowdisplayskip=2pt
\end{equation}

\noindent
for all $n$ coprime to $Q$. In fact, we can state the following theorem:

\begin{theorem}
    \label{modulo large primes}
    Let $m\geq13$ be prime. Then there are positive proportion of primes $Q$ such that
    \begin{equation}
        \abovedisplayskip=2pt
        \notag
        \modulo{\bar p(mQ^3n)}{0}{m}
        \belowdisplayskip=2pt
    \end{equation}

    \noindent
    for all $n$ coprime to $mQ$.
\end{theorem}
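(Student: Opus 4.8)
The plan is to recognize the overpartition generating function as a weakly holomorphic modular form of half-integral weight and then run the standard machinery of Treneer, Shimura lifts, and the Chebotarev density theorem. Writing $q = e^{2\pi i \tau}$, the product formula converts into eta quotients,
\[
\sum_{n=0}^\infty \bar p(n) q^n = \frac{\eta(2\tau)}{\eta(\tau)^2},
\]
which is a weakly holomorphic modular form of weight $-1/2$ on $\Gamma_0(16)$ with a suitable quadratic character. It is holomorphic on $\mathbb{H}$ but has poles at some cusps, so it is not directly amenable to Hecke theory.

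First I would clear these poles modulo $m$. Using the congruence $\eta(\tau)^m \equiv \eta(m\tau) \pmod m$ (the ``freshman's dream'' applied to $\prod(1-q^n)^m \equiv \prod(1-q^{mn})$), together with an Eisenstein-type form congruent to $1 \pmod m$ to raise the weight into the positive half-integral range, and the sieving operators $U_m$ and $V_m$ to annihilate the principal parts at the bad cusps modulo $m$, one produces a holomorphic cusp form $g = \sum b(n) q^n$ of positive half-integral weight on $\Gamma_0(16 m^t)$ with $m$-integral coefficients, satisfying $b(n) \equiv \bar p(mn) \pmod m$ for $n$ coprime to $m$; the factor $m$ inside $\bar p(mQ^3 n)$ is exactly what the $U_m$ step introduces. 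The hard part will be this reduction step: one must carry out all the bookkeeping of levels, characters, and orders of vanishing at every cusp so that the coefficients on the progression $mn$ are preserved exactly while everything else is altered only by multiples of $m$, and one must verify that $g$ is not identically zero modulo $m$, for otherwise the later density argument would be vacuous. This is where Serre's theory of modular forms mod $\ell$ and control of the filtration enter.

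Next I would pass to integral weight via the Shimura correspondence. Decomposing $g \bmod m$ into generalized Hecke eigenforms and applying the Shimura lift yields integral weight cusp forms whose $T_Q$-eigenvalues coincide with the $T_{Q^2}$-eigenvalues $\lambda_Q$ of the half-integral weight pieces. To each such integral weight eigenform, Deligne attaches a residual Galois representation $\rho \colon \mathrm{Gal}(\overline{\mathbb{Q}}/\mathbb{Q}) \to \mathrm{GL}_2(\overline{\mathbb{F}}_m)$ with $\mathrm{tr}\,\rho(\mathrm{Frob}_Q) \equiv \lambda_Q \pmod m$. The set of trace-zero elements is a union of conjugacy classes of positive measure in the (large) image, so by Chebotarev's density theorem applied over the number field cut out by the compositum of all these residual representations, a positive proportion of primes $Q$ have $\lambda_Q \equiv 0 \pmod m$ simultaneously for every eigencomponent, whence $g \mid T_{Q^2} \equiv 0 \pmod m$. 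Imposing in addition $Q \equiv -1$ modulo the level, which makes the relevant genus-character twists behave and pins down the nebentypus, keeps the proportion positive.

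Finally I would extract the congruence from the Hecke recursion. For a half-integral weight eigenform, comparing the coefficient of $q^{Qn}$ in $g \mid T_{Q^2} = \lambda_Q g$ for $\gcd(n,Q) = 1$ gives
\[
\lambda_Q\, b(Qn) \equiv b(Q^3 n) \pmod m,
\]
since the middle Legendre-symbol term vanishes (as $Q \mid Qn$) and the term $b(Qn/Q^2) = b(n/Q)$ vanishes for $\gcd(n,Q) = 1$. Thus $\lambda_Q \equiv 0 \pmod m$ forces $b(Q^3 n) \equiv 0 \pmod m$, that is $\bar p(m Q^3 n) \equiv 0 \pmod m$ for all $n$ coprime to $mQ$. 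The restriction $m \geq 13$ simply reflects that for these primes the relevant Shimura lift is a genuine cusp form with large image, so that one can only guarantee a positive proportion of admissible $Q$ rather than an explicit congruence class as in Theorems \ref{Mod 3 congruences}, \ref{Mod 7 congruences}, and \ref{Mod 11 congruences}.
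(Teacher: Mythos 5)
Your proposal is correct and follows essentially the same route as the paper: holomorphize $\sum\bar p(mn)q^n$ modulo $m$ via Treneer-style eta-quotient manipulations (Theorem \ref{sum bar p(mn) q^n is a modular form}), sieve off the coefficients with $m\mid n$ to obtain a genuine cusp form (Theorem \ref{cuspation} --- exactly the step that forces the hypothesis $(n,mQ)=1$ rather than $(n,Q)=1$), and then apply the positive-density annihilation of that space under $T(Q^2)$ for $Q\equiv-1$ modulo the level (Theorem \ref{Serre's theorem refined}), whose Shimura-lift/Galois/Chebotarev proof you unpack where the paper simply cites it. The only inessential quibble is your insistence on verifying $g\not\equiv0\pmod m$: that check is unnecessary, since if $g$ vanished identically modulo $m$ the asserted congruence would hold vacuously for every $Q$.
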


\noindent
In the proofs of these theorems, we discover a close relationship between overpartitions and half-integral weight modular forms. The key to proving these congruences lies in identifying the corresponding modular form. However, finding such a modular form is not a straightforward task.

\section{Preliminaries}

First, we introduce the $U$ and $V$ operators for formal series. If $j$ is a positive integer, these operators are defined as follows:
\begin{equation}
    \abovedisplayskip=1em
    \notag
    \left( \sum_{n=0}^\infty a(n)q^n \right)\ |\ U(j):=\sum_{n=0}^\infty a(jn)q^n,
    \belowdisplayskip=1em
\end{equation}
\begin{equation}
    \abovedisplayskip=1em
    \notag
    \left( \sum_{n=0}^\infty a(n)q^n \right)\ |\ V(j):=\sum_{n=0}^\infty a(n)q^{jn}.
    \belowdisplayskip=1em
\end{equation}

\noindent
The behavior of these operators is described in the following proposition \cite[Proposition 2.22]{ono2004web}.

\begin{proposition}
    Let $f(z)\in M_{k}(\Gamma_0(N),\chi)$ with integral weight.
    \begin{enumerate}
        \item If $j\mid N$, then $f(z)\ |\ U(j)\in M_{k}(\Gamma_0(N),\chi)$.
        \item $f(z)\ |\ V(j)\in M_{k}(\Gamma_0(jN),\chi)$.
    \end{enumerate}
\end{proposition}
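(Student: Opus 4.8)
The plan is to express both operators as finite combinations of the weight-$k$ slash action and then reduce everything to the modularity of $f$ itself. With the convention $(f|_k\gamma)(z) = (\det\gamma)^{k/2}(cz+d)^{-k}f(\gamma z)$ for $\gamma = \begin{pmatrix} a & b \\ c & d\end{pmatrix}$ in $\mathrm{GL}_2^+(\mathbb{Q})$, the hypothesis $f\in M_k(\Gamma_0(N),\chi)$ (integral weight) reads $f|_k\gamma = \chi(d)f$ for every $\gamma\in\Gamma_0(N)$. First I would record the two identities that turn $q$-expansion manipulations into matrix ones: for $V(j)$, setting $\alpha = \begin{pmatrix} j & 0 \\ 0 & 1\end{pmatrix}$ gives $(f|V(j))(z) = f(jz) = j^{-k/2}(f|_k\alpha)(z)$; for $U(j)$, averaging $f((z+\lambda)/j)$ over $\lambda$ and extracting the terms with $j\mid n$ yields $f|U(j) = j^{k/2-1}\sum_{\lambda=0}^{j-1}f|_k\beta_\lambda$ with $\beta_\lambda = \begin{pmatrix} 1 & \lambda \\ 0 & j\end{pmatrix}$.

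For part (2), I would take an arbitrary $\gamma = \begin{pmatrix} a & b \\ c & d\end{pmatrix}\in\Gamma_0(jN)$ and compute the conjugate $\alpha\gamma\alpha^{-1} = \begin{pmatrix} a & jb \\ c/j & d\end{pmatrix}$. Since $jN \mid c$ we have $N \mid c/j$, so $\alpha\gamma\alpha^{-1}\in\Gamma_0(N)$ and its lower-right entry is again $d$. The cocycle relation then gives $(f|_k\alpha)|_k\gamma = \bigl(f|_k(\alpha\gamma\alpha^{-1})\bigr)|_k\alpha = \chi(d)(f|_k\alpha)$, which is exactly the invariance of $f|V(j)$ under $\Gamma_0(jN)$ with the same character $\chi$.

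For part (1), I would show that right multiplication by $\gamma\in\Gamma_0(N)$ permutes the cosets $\Gamma_0(N)\beta_\lambda$. The decisive use of the hypothesis $j\mid N$ is that it forces $j\mid c$, whence $ad\equiv 1\pmod j$ makes $d$ a unit modulo $j$. Solving $\beta_\lambda\gamma = \gamma_\lambda\beta_{\lambda'}$ then produces $\lambda'\equiv d(b+\lambda d)\pmod j$, an affine map of $\mathbb{Z}/j\mathbb{Z}$ with invertible linear coefficient $d^2$, hence a bijection. One checks that the resulting $\gamma_\lambda$ lies in $SL_2(\mathbb{Z})$ (its determinant is forced to be $1$), has lower-left entry $jc$ divisible by $N$, and has lower-right entry $\equiv d\pmod N$; therefore $\gamma_\lambda\in\Gamma_0(N)$ with $\chi(\gamma_\lambda)=\chi(d)$. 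Summing over $\lambda$ and reindexing by the bijection $\lambda\mapsto\lambda'$ gives $(f|U(j))|_k\gamma = \chi(d)(f|U(j))$.

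It remains to verify holomorphy at every cusp. Since both operators have been rewritten as finite $\mathbb{C}$-linear combinations of forms $f|_k(\text{fixed matrix})$ that are holomorphic on the upper half-plane, I would reduce holomorphy at the cusps to a growth estimate: each slashed term is of moderate growth because $f$ is itself holomorphic at all cusps, and the non-negativity of the exponents in the resulting $q$-expansions yields the claim (in particular $f|U(j)=\sum a(jn)q^n$ and $f|V(j)=\sum a(n)q^{jn}$ are manifestly holomorphic at $\infty$). I expect the coset-permutation step for $U(j)$—specifically the verification that $\gamma_\lambda$ is integral and that the character is tracked correctly—to be the main technical obstacle, and it is precisely there that the divisibility $j\mid N$ is indispensable.
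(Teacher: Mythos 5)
Your proof is correct and is the standard argument: writing $V(j)$ as a single slash by $\left(\begin{smallmatrix} j & 0 \\ 0 & 1\end{smallmatrix}\right)$ and $U(j)$ as a sum of slashes by $\left(\begin{smallmatrix} 1 & \lambda \\ 0 & j\end{smallmatrix}\right)$, then checking that conjugation (resp.\ right multiplication by $\gamma\in\Gamma_0(N)$, which permutes the cosets precisely because $j\mid N$ forces $j\mid c$) preserves $\Gamma_0(N)$ and the character value $\chi(d)$. The paper itself offers no proof, citing this as Proposition 2.22 of Ono's book, and your argument matches the one given there.
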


\noindent
Now, we introduce the Hecke operator on modular forms of integral weight. Let $Q$ be a prime. The Hecke operator $T(Q)$ is defined as follows:
\begin{equation}
    \abovedisplayskip=1em
    \notag
    \left( \sum_{n=0}^\infty a(n)q^n \right)\ |\ T(Q):=\sum_{n=0}^\infty \bracket{a(Qn)+\chi(Q)Q^{k-1}a\bracket{\frac nQ}}q^n,
    \belowdisplayskip=1em
\end{equation}

\noindent
where $a(n/Q)=0$ if $Q\nmid n$.

Let $f(z)\in M_{k}(\Gamma_0(N),\chi)$. Then, $f(z)\ |\ T(Q)\in M_{k}(\Gamma_0(N),\chi)$. If $f(z)$ is a cusp form, then so are $f(z)\ |\ U(j)$, $f(z)\ |\ V(j)$, and $f(z)\ |\ T(Q)$.

There are $U$ and $V$ operators for modular forms of half-integral weight. The following proposition describes the behavior of these operators on half-integral weight modular forms \cite[Proposition 3.7]{ono2004web}.

\begin{proposition}
    \label{U,V operators for half-integral weight modular forms}
    Let $f(z)\in M_{\lambda+\frac12}(\Gamma_0(4N),\chi)$.
    \begin{enumerate}
        \item If $j\mid N$, then $f(z)\ |\ U(j)\in M_{\lambda+\frac12}(\Gamma_0(4N),\bracket{\frac{4j}{\bullet}}\chi)$.
        \item $f(z)\ |\ V(j)\in M_{\lambda+\frac12}(\Gamma_0(4jN),\bracket{\frac{4j}{\bullet}}\chi)$.
    \end{enumerate}
\end{proposition}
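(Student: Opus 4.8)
The plan is to derive both assertions from the defining transformation law of half-integral weight forms together with an explicit coset computation, the only real content being the bookkeeping of Shimura's theta-multiplier. Recall that $f\in M_{\lambda+\frac12}(\Gamma_0(4N),\chi)$ means
\begin{equation}
    \notag
    f(\gamma z)=\chi(d)\,j(\gamma,z)^{2\lambda+1}f(z),\qquad \gamma=\begin{pmatrix} a & b \\ c & d\end{pmatrix}\in\Gamma_0(4N),
\end{equation}
where $j(\gamma,z)=\left(\frac{c}{d}\right)\epsilon_d^{-1}(cz+d)^{1/2}$ is the automorphy factor of $\theta(z)=\sum_{n}q^{n^2}$, with $\epsilon_d=1$ for $\modulo{d}{1}{4}$ and $\epsilon_d=i$ for $\modulo{d}{3}{4}$, supplemented by holomorphy and the correct growth at all cusps. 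I would prove the two parts in parallel, since each reduces to a single matrix identity followed by a quadratic-reciprocity simplification; the exponent $2\lambda+1$ is odd, so every Jacobi-symbol factor enters linearly.

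For part (2), note $f\mid V(j)$ is simply $z\mapsto f(jz)$. Given $\gamma=\begin{pmatrix} a & b \\ c & d\end{pmatrix}\in\Gamma_0(4jN)$, I would use the factorization
\begin{equation}
    \notag
    \begin{pmatrix} j & 0 \\ 0 & 1\end{pmatrix}\begin{pmatrix} a & b \\ c & d\end{pmatrix}=\gamma'\begin{pmatrix} j & 0 \\ 0 & 1\end{pmatrix},\qquad \gamma':=\begin{pmatrix} a & jb \\ c/j & d\end{pmatrix}\in\Gamma_0(4N),
\end{equation}
which is legitimate because $4jN\mid c$ forces $4N\mid c/j$ while $\det\gamma'=1$. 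Hence $f(j\gamma z)=f(\gamma'(jz))=\chi(d)\,j(\gamma',jz)^{2\lambda+1}f(jz)$. Since $(c/j)(jz)+d=cz+d$, the factors $(cz+d)^{1/2}$ and $\epsilon_d$ agree with those of $j(\gamma,z)$, while multiplicativity of the Jacobi symbol gives $\left(\frac{c/j}{d}\right)=\left(\frac{j}{d}\right)\left(\frac{c}{d}\right)$. As $\left(\frac{4}{d}\right)=1$, the surviving twist is exactly $\left(\frac{j}{d}\right)=\left(\frac{4j}{d}\right)$, so $f\mid V(j)$ transforms with character $\left(\frac{4j}{\bullet}\right)\chi$ on $\Gamma_0(4jN)$, as claimed.

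For part (1), I would write $(f\mid U(j))(z)=\frac1j\sum_{\nu=0}^{j-1}f\left(\frac{z+\nu}{j}\right)$, a normalized sum over $A_\nu=\begin{pmatrix} 1 & \nu \\ 0 & j\end{pmatrix}$. For $\gamma\in\Gamma_0(4N)$ with $j\mid N$ one has $4N\mid c$, hence $j\mid c$, and the key step is the coset identity $A_\nu\gamma=\gamma_\nu A_{\sigma(\nu)}$ with $\gamma_\nu=\begin{pmatrix} a+\nu c & * \\ jc & d-c\sigma(\nu)\end{pmatrix}\in\Gamma_0(4N)$, where $\modulo{\sigma(\nu)}{a^{-1}(b+\nu d)}{j}$ is a permutation of $\{0,\dots,j-1\}$ (well defined since $\modulo{ad}{1}{j}$). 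Evaluating $f$ at $\gamma_\nu\left(\frac{z+\sigma(\nu)}{j}\right)$, I would check that $jc\cdot\frac{z+\sigma(\nu)}{j}+(d-c\sigma(\nu))=cz+d$, so the $(cz+d)^{1/2}$ factor is correct, that $\modulo{c}{0}{4N}$ makes $\chi(d-c\sigma(\nu))=\chi(d)$ and $\epsilon_{d-c\sigma(\nu)}=\epsilon_d$, and that a quadratic-reciprocity computation collapses the Jacobi-symbol ratio to the $\nu$-independent constant $\left(\frac{j}{d}\right)=\left(\frac{4j}{d}\right)$. Since $\sigma$ permutes the residues, $\frac1j\sum_\nu f\left(\frac{z+\sigma(\nu)}{j}\right)=\frac1j\sum_{\nu'}f\left(\frac{z+\nu'}{j}\right)=(f\mid U(j))(z)$, yielding the transformation law with character $\left(\frac{4j}{\bullet}\right)\chi$ on $\Gamma_0(4N)$.

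The main obstacle is this theta-multiplier bookkeeping: one must control the ratio of the Shimura factors $\left(\frac{c}{d}\right)\epsilon_d^{-1}$ between $\gamma$ and the transformed matrices, which in general requires quadratic reciprocity for the Jacobi symbol and a consistent choice of the branch of $(cz+d)^{1/2}$ (equivalently, lifting the matrix identities to the metaplectic cover so that the $\pm1$ and fourth-root cocycle ambiguities cancel). For $V(j)$ the ratio is immediate because $d$ is unchanged; for $U(j)$ the point is subtler, since the denominator $d-c\sigma(\nu)$ varies with $\nu$, and the congruences $\modulo{c}{0}{4N}$ and $j\mid N$ are precisely what force $\epsilon$, $\chi$, and the symbol $\left(\frac{j}{\cdot}\right)$ to be constant along the sum. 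Finally, holomorphy and growth at the cusps are preserved: $V(j)$ acts by $z\mapsto jz$, which permutes cusps and respects the boundedness condition, while $U(j)$ merely selects a subprogression of Fourier coefficients and its behaviour at each cusp is read off from the same coset representatives.
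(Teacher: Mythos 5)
The paper offers no proof of this proposition: it is quoted as background, with a citation to \cite[Proposition 3.7]{ono2004web}. Your argument is essentially the standard proof of that cited result, and it is correct in all the places that matter: the factorization $\left(\begin{smallmatrix} j & 0 \\ 0 & 1\end{smallmatrix}\right)\gamma=\gamma'\left(\begin{smallmatrix} j & 0 \\ 0 & 1\end{smallmatrix}\right)$ for $V(j)$, the coset identity $A_\nu\gamma=\gamma_\nu A_{\sigma(\nu)}$ with $\sigma$ a permutation for $U(j)$, and the observation that $4N\mid c$ together with $j\mid N$ forces $4j\mid c$, which is exactly what makes $\chi(d-c\sigma(\nu))=\chi(d)$, $\epsilon_{d-c\sigma(\nu)}=\epsilon_d$, and $\bigl(\tfrac{j}{d-c\sigma(\nu)}\bigr)=\bigl(\tfrac{j}{d}\bigr)$ (the symbol $\bigl(\tfrac{j}{\cdot}\bigr)$ being periodic mod $4j$). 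You have also correctly isolated the one genuinely delicate step, namely the $\nu$-independence of the ratio of theta-multipliers $\bigl(\tfrac{c}{d}\bigr)\epsilon_d^{-1}$, which requires the periodicity of the Kronecker symbol in its denominator and a consistent sign convention; spelling that out fully is the only thing separating your outline from a complete proof.
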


\noindent
For primes $Q$, the half-integral weight Hecke operator $T(Q^2)$ for $f(z)\in M_{\lambda+\frac12}(\Gamma_0(4N),\chi)$ is defined as follows:
\begin{equation}
    \abovedisplayskip=1em
    \notag
    \begin{aligned}
        &\ \ \ \ \left( \sum_{n=0}^\infty a(n)q^n \right)\ |\ T(Q^2) \\
        &:=\sum_{n=0}^\infty \bracket{a(Q^2n)+\bracket{\frac{(-1)^\lambda n}{Q}}\chi(Q)Q^{\lambda-1}a(Q)+\bracket{\frac{(-1)^\lambda}{Q^2}}\chi(Q^2)Q^{2\lambda-1}a\bracket{\frac{n}{Q^2}}}q^n,
    \end{aligned}
    \belowdisplayskip=1em
\end{equation}

\noindent
where $f(z)\ |\ T(Q^2)\in M_{\lambda+\frac12}(\Gamma_0(4N),\chi)$. Moreover, if $f(z)\in M_{\lambda+\frac12}(\Gamma_0(4N),\chi)$ is a cusp form, then so are $f(z)\ |\ U(j)$, $f(z)\ |\ V(j)$, and $f(z)\ |\ T(Q^2)$.

It's worth recalling that Dedekind's eta function is defined as:
\begin{equation}
    \abovedisplayskip=1em
    \notag
    \eta(z)=q^\frac{1}{24}\prod_{n=1}^\infty (1-q^n),
    \belowdisplayskip=1em
\end{equation}

\noindent
where $q=e^{2\pi iz}$. Thus
\begin{equation}
    \abovedisplayskip=1em
    \notag
    \sum_{n=0}^\infty\bar p(n)q^n=\frac{\eta(2z)}{\eta^2(z)}.
    \belowdisplayskip=1em
\end{equation}

\noindent
If $m$ is a prime, we denote by $M_{\frac{k}{2}}(\Gamma_0(N),\chi)_m$ (respectively, $S_{\frac{k}{2}}(\Gamma_0(N),\chi)_m$) the $\mathbb{F}_m$-vector space obtained by reducing the $q$-expansions of modular forms (resp. cusp forms) in $M_{\frac{k}{2}}(\Gamma_0(N),\chi)$ (resp. $S_{\frac{k}{2}}(\Gamma_0(N),\chi)$) with integer coefficients modulo $m$.

At times, for convenience, we will use the notation $a\equiv_m b$ instead of $a\equiv b\ (\mathrm{mod}\ m)$.

~

The construction of modular forms requires the utilization of the following theorem \cite[Theorem 3]{gordon1993multiplicative}:

\begin{theorem}[Gordon-Hughes]
\label{eta-quotient}
Let
$$f(z)=\prod_{\delta|N}\eta^{r_\delta}(\delta z)$$

\noindent
be a $\eta$-quotient provided

~

\noindent
$\mathrm{(\romannumeral1)}$ $$\sum_{\delta|N}\delta r_\delta\equiv0\ (\mathrm{mod}\ 24);$$
$\mathrm{(\romannumeral2)}$ $$\sum_{\delta|N}\frac{Nr_\delta}{\delta}\equiv0\ (\mathrm{mod}\ 24);$$
$\mathrm{(\romannumeral3)}$ $$k:=\frac{1}{2}\sum_{\delta|N}r_\delta\in\mathbb{Z},$$

\noindent
then

$$f\left(\frac{az+b}{cz+d}\right)=\chi(d)(cz+d)^kf(z),$$

\noindent
for each$\begin{pmatrix}
     a & b\\
     c & d
    \end{pmatrix}\in\Gamma_0(N)$ and $\chi$ is a Dirichlet character $(\mathrm{mod}\ N)$ defined by
$$\chi(n):=\left( \frac{(-1)^k\prod_{\delta|N}\delta^{r_\delta}}{n} \right),\ if\ n>0\ and\ (n,6)=1.$$
\end{theorem}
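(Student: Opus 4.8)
The plan is to reduce everything to the classical transformation law for Dedekind's $\eta$ under the full modular group and then to isolate and evaluate the resulting root-of-unity multiplier. Recall that for $\gamma=\begin{pmatrix} a & b \\ c & d\end{pmatrix}\in SL_2(\mathbb{Z})$ with $c>0$ one has
$$\eta(\gamma z)=\varepsilon(a,b,c,d)\,(cz+d)^{1/2}\eta(z),\qquad \varepsilon(a,b,c,d)=\exp\left(\pi i\left(\frac{a+d}{12c}-s(d,c)-\frac14\right)\right),$$
where $s(d,c)$ denotes the Dedekind sum. First I would handle each factor $\eta(\delta z)$ separately. Given $\gamma\in\Gamma_0(N)$ and $\delta\mid N$, so that $\delta\mid c$, I introduce the auxiliary matrix $\gamma_\delta=\begin{pmatrix} a & b\delta \\ c/\delta & d\end{pmatrix}$, which lies in $SL_2(\mathbb{Z})$ (its determinant is still $ad-bc=1$) and satisfies $\gamma_\delta(\delta z)=\delta(\gamma z)$. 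Applying the transformation law to $\gamma_\delta$ at the point $\delta z$ yields
$$\eta(\delta\,\gamma z)=\eta\bigl(\gamma_\delta(\delta z)\bigr)=\varepsilon(a,b\delta,c/\delta,d)\,(cz+d)^{1/2}\eta(\delta z),$$
since $(c/\delta)(\delta z)+d=cz+d$.

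Next I would multiply these relations together with exponents $r_\delta$. The automorphy factors combine into $(cz+d)^{\frac12\sum_\delta r_\delta}=(cz+d)^k$, where integrality of $k$ is exactly condition (iii); this also removes any ambiguity in the choice of square root. What remains is a single multiplier
$$\mu(\gamma)=\prod_{\delta\mid N}\varepsilon(a,b\delta,c/\delta,d)^{r_\delta},$$
so that $f(\gamma z)=\mu(\gamma)(cz+d)^k f(z)$, and the whole problem is reduced to proving $\mu(\gamma)=\chi(d)$.

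The hard part is evaluating $\mu(\gamma)$. Written out, $\mu(\gamma)$ is $\exp(\pi i)$ raised to a $\mathbb{Q}$-linear combination of the quantities $\frac{a+d}{12(c/\delta)}$ and the Dedekind sums $s(d,c/\delta)$. I would apply the reciprocity law
$$s(d,c/\delta)+s(c/\delta,d)=-\frac14+\frac{1}{12}\left(\frac{d}{c/\delta}+\frac{c/\delta}{d}+\frac{1}{d\,c/\delta}\right)$$
to trade each $s(d,c/\delta)$ for the more tractable $s(c/\delta,d)$, whose dependence on $\delta$ can be controlled modulo $1$ because $d$ is fixed across all factors. Conditions (i), $\sum_\delta \delta r_\delta\equiv0\pmod{24}$, and (ii), $\sum_\delta (N/\delta)r_\delta\equiv0\pmod{24}$, are precisely the congruences needed to annihilate the leftover rational phases coming from the $a+d$ terms and from the reciprocity corrections, leaving a factor depending on $d$ alone. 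Finally I would identify the surviving phase with the quadratic residue symbol $\left(\tfrac{(-1)^k\prod_\delta \delta^{r_\delta}}{d}\right)$ via quadratic reciprocity and the standard evaluation of the associated Gauss-type sum, which is the definition of $\chi(d)$. I expect this last bookkeeping — converting the accumulated Dedekind-sum phases into a single Jacobi symbol while invoking (i) and (ii) at exactly the right moments — to be the main obstacle; by contrast the reduction to the $\eta$-transformation in the first two steps is routine.
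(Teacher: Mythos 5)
This theorem is quoted in the paper from Gordon--Hughes \cite{gordon1993multiplicative} without proof, so there is no internal argument to compare against; I can only judge your proposal on its own terms. Your reduction is the standard and correct opening: the auxiliary matrix $\gamma_\delta=\begin{pmatrix} a & b\delta \\ c/\delta & d\end{pmatrix}$ does lie in $SL_2(\mathbb{Z})$ and satisfies $\gamma_\delta(\delta z)=\delta\gamma z$ with the same automorphy denominator $cz+d$, so the problem genuinely reduces to showing that $\mu(\gamma)=\prod_{\delta\mid N}\varepsilon(a,b\delta,c/\delta,d)^{r_\delta}$ equals $\chi(d)$. This is exactly the Newman--Ligozat--Gordon--Hughes route.

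The gap is that everything after that reduction --- which is the entire content of the theorem --- is described rather than carried out, and the description oversimplifies how the hypotheses enter. It is not the case that condition (i) directly kills the $\frac{a+d}{12(c/\delta)}$ contributions and condition (ii) the reciprocity corrections: summing $\frac{a+d}{12(c/\delta)}=\frac{\delta(a+d)}{12c}$ against $r_\delta$ and invoking (i) leaves a quantity of the form $\frac{2m(a+d)}{c}$, which is not an integer in general. In the actual proof one must first combine the $\frac{a+d}{12(c/\delta)}$ term with the $\frac{1}{12}\bigl(\frac{d}{c/\delta}+\frac{1}{d(c/\delta)}\bigr)$ terms from reciprocity, using $ad+1=2+bc$ to produce $\frac{b\delta}{12d}$-type and $\frac{c}{12\delta d}$-type terms, and only then do (i) and (ii) (together with $N\mid c$ and $(d,6)$-considerations) force the residual phase to depend on $d$ alone; identifying that residue with $\left(\frac{(-1)^k\prod_\delta\delta^{r_\delta}}{d}\right)$ then requires the explicit evaluation of $e^{-\pi i\, r\, s(c,d)}$ via quadratic reciprocity. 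You have also not addressed the cases $c=0$ and $c<0$ (the transformation law as you state it needs $c>0$), the sign and parity restrictions on $d$ needed to apply Dedekind reciprocity (both arguments positive), or why it suffices to define $\chi$ only on $n>0$ with $(n,6)=1$. None of these is fatal --- the strategy is the right one --- but as written the proof establishes only the automorphy factor $(cz+d)^k$ and asserts, rather than proves, that the multiplier is the stated character.
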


If $f(z)$ is holomorphic (resp. vanishes) at all cusps of $\Gamma_0(N)$, then $f(z)\in M_k(\Gamma_0(N)$ $,\chi)$ (resp. $S_k(\Gamma_0(N),\chi)$), as $\eta(z)$ never vanishes on $\mathcal{H}$. The following theorem (cf. \cite{martin1996multiplicative}) provides a useful criterion for computing the orders of an $\eta$-quotient at all cusps of $\Gamma_0(N)$.

\begin{theorem}[Martin]

\label{order of cusp}
Let $c$, $d$, and $N$ be positive integers with $d\mid N$ and $(c,d)=1$. If $f(z)$ is an $\eta$-quotient that satisfies the conditions of Theorem \ref{eta-quotient}, then the order of vanishing of $f(z)$ at the cusp $c/d$ is
$$\frac{N}{24}\sum_{\delta|N}\frac{r_\delta(d^2,\delta^2)}{\delta(d^2,N)}.$$

\end{theorem}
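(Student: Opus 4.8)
The plan is to reduce the whole computation to the transformation behaviour of a single factor $\eta(\delta z)$ at the cusp $c/d$, and then to assemble these contributions and renormalize by the width of the cusp. The essential input is the transformation law of Dedekind's eta function: for every $\gamma=\begin{pmatrix} a & b\\ \gamma_c & \gamma_d\end{pmatrix}\in \mathrm{SL}_2(\mathbb{Z})$ with $\gamma_c>0$ one has $\eta(\gamma z)=\varepsilon(\gamma)(\gamma_c z+\gamma_d)^{1/2}\eta(z)$, where $\varepsilon(\gamma)$ is an explicit nonzero $24$-th root of unity. Since $\eta$ never vanishes on $\mathcal{H}$, the order of vanishing of an $\eta$-quotient at a cusp is governed entirely by the leading power of $q$ produced when the cusp is carried to $i\infty$, the automorphy factors $(\gamma_c z+\gamma_d)^{1/2}$ contributing only to the weight and not to the $q$-order.

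First I would fix the cusp. Since $(c,d)=1$, I choose $\gamma=\begin{pmatrix} c & a'\\ d & b'\end{pmatrix}\in \mathrm{SL}_2(\mathbb{Z})$ with $\gamma(\infty)=c/d$, so that the order of $f$ at $c/d$ is read off from the expansion of $f(\gamma z)$ as $z\to i\infty$, measured in the local uniformizer of that cusp. For each $\delta\mid N$ I would then analyze $\eta(\delta\gamma z)$. Writing $\begin{pmatrix}\delta & 0\\ 0 & 1\end{pmatrix}\gamma=\begin{pmatrix}\delta c & \delta a'\\ d & b'\end{pmatrix}$ and setting $g=(\delta,d)$, I factor this matrix as $M\begin{pmatrix} g & h\\ 0 & \delta/g\end{pmatrix}$ with $M\in \mathrm{SL}_2(\mathbb{Z})$; the key coprimality $(\delta c/g,\,d/g)=1$, which uses $(c,d)=1$ together with $g=(\delta,d)$, guarantees that such an $M$ exists. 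Applying the eta transformation law to $M$ and extracting the leading term of $\eta(w)\sim e^{2\pi i w/24}$ at $w=\tfrac{g^2 z+gh}{\delta}\to i\infty$, one finds that the factor $\eta(\delta z)$ contributes $\frac{(\delta,d)^2}{24\delta}=\frac{(\delta^2,d^2)}{24\delta}$ to the exponent of $e^{2\pi i z}$ in $f(\gamma z)$.

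Finally I would sum over $\delta$ with multiplicities $r_\delta$ to obtain the leading exponent
\[
\lambda=\sum_{\delta\mid N} r_\delta\,\frac{(\delta^2,d^2)}{24\delta}
\]
of $f(\gamma z)$ in $e^{2\pi i z}$, and convert this into the order $\nu$ in the local parameter $q_h=e^{2\pi i z/h}$ of the cusp $c/d$ in $\Gamma_0(N)$, whose width is $h=N/(d^2,N)$. Since the stripped leading term is $e^{2\pi i\lambda z}=q_h^{\lambda h}$, the order is $\nu=\lambda h$, which is exactly
\[
\nu=\frac{N}{(d^2,N)}\sum_{\delta\mid N}\frac{r_\delta(\delta^2,d^2)}{24\delta}=\frac{N}{24}\sum_{\delta\mid N}\frac{r_\delta(d^2,\delta^2)}{\delta(d^2,N)},
\]
the claimed formula. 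As a consistency check, taking $d=N$ recovers the familiar order $\frac1{24}\sum_{\delta\mid N}\delta r_\delta$ at $\infty$, while $d=1$ gives the order $\frac{N}{24}\sum_{\delta\mid N} r_\delta/\delta$ at the cusp $0$.

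The main obstacle is the careful bookkeeping in the middle step: proving the existence of the $\mathrm{SL}_2(\mathbb{Z})$ factorization through the gcd identities, verifying $(\delta^2,d^2)=(\delta,d)^2$ and the width formula $h=N/(d^2,N)$, and confirming that the computed exponent is genuinely the \emph{minimal} one, i.e.\ that the leading coefficient does not vanish. The last point is precisely where the nonvanishing of $\varepsilon(M)$ and of $\eta$ on $\mathcal{H}$ is used, so that no cancellation can lower the order below $\lambda h$.
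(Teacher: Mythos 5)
The paper does not prove this theorem; it is quoted directly from Martin's work (and the formula goes back to Ligozat), so there is no in-paper argument to compare against. Your sketch is the standard proof and is correct: the matrix identity $\left(\begin{smallmatrix}\delta & 0\\ 0 & 1\end{smallmatrix}\right)\gamma = M\left(\begin{smallmatrix} g & * \\ 0 & \delta/g\end{smallmatrix}\right)$ with $g=(\delta,d)$ works because $(\delta c,d)=(\delta,d)$ follows from $(c,d)=1$, so the first column $(\delta c/g,\,d/g)$ of $M$ is primitive and extends to an element of $\mathrm{SL}_2(\mathbb{Z})$; the resulting leading exponent $(\delta,d)^2/(24\delta)$ per factor, summed with multiplicities $r_\delta$ and rescaled by the cusp width $N/(d^2,N)$, gives exactly the stated formula, and your sanity checks at $d=N$ and $d=1$ are right. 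Two small points worth tidying in a written version: you use $h$ both for the upper-right entry of the triangular matrix and for the cusp width (the former only contributes a harmless constant phase $e^{2\pi i gh'/(24\delta)}$, but the clash should be removed), and the nonvanishing of the leading coefficient deserves the one-line justification you indicate, namely that each factor contributes a nonzero root of unity times $1$ as its lowest coefficient, so the product cannot cancel.
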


\section{Modulo 3}

First, we introduce the Ramanujan theta functions:
\begin{equation}
    \abovedisplayskip=1em
    \notag
    \phi(q)=\sum_{n=-\infty}^\infty q^{n^2},
    \belowdisplayskip=1em
\end{equation}
\begin{equation}
    \abovedisplayskip=1em
    \notag
    \psi(q)=\sum_{n=0}^\infty q^{\frac{n(n+1)}{2}}.
    \belowdisplayskip=1em
\end{equation}

\noindent
We will use the notation as presented in \cite{andrews2010arithmetic}, 
\begin{equation}
    \abovedisplayskip=1em
    \notag
    a(q)=\phi(-q^3),
    \belowdisplayskip=1em
\end{equation}
\begin{equation}
    \abovedisplayskip=1em
    \notag
    b(q)=\frac{\qPochhammer{q}{q}{}\qPochhammer{q^6}{q^6}{2}}{\qPochhammer{q^2}{q^2}{}\qPochhammer{q^3}{q^3}{}},
    \belowdisplayskip=1em
\end{equation}

\noindent
where $\qPochhammer{a}{q}{}=\prod_{n=0}^\infty(1-aq^n)$. The following lemma is useful for obtaining some surprising identities \cite[Lemma 2.6 and 2.7]{andrews2010arithmetic}.

\begin{lemma}[Andrews-Hirschhorn-Sellers]
    \label{Lemma AHS}
    \begin{equation}
        \abovedisplayskip=1em
        \notag
        \phi(-q)=a(q^3)-2qb(q^3),
        \belowdisplayskip=1em
    \end{equation}
    \begin{equation}
        \abovedisplayskip=1em
        \notag
        a(q)^3-8qb(q)^3=\frac{\phi(-q)^4}{\phi(-q^3)}.
        \belowdisplayskip=1em
    \end{equation}
\end{lemma}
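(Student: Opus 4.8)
The plan is to prove the two identities in turn, deriving the cubic relation from the dissection by a roots-of-unity argument. For the first identity I would start from the series $\phi(-q)=\sum_{n=-\infty}^\infty(-1)^nq^{n^2}$ and split the sum according to the residue of $n$ modulo $3$. The terms with $n\equiv0\ (\mathrm{mod}\ 3)$, on writing $n=3m$, contribute $\sum_m(-1)^mq^{9m^2}=\phi(-q^9)=a(q^3)$. The terms with $n\not\equiv0\ (\mathrm{mod}\ 3)$, on writing $n=3m\pm1$, all carry the sign $-(-1)^m$ and the exponent $9m^2\pm6m+1$; the two families agree under $m\mapsto-m$, so together they give $-2q\sum_m(-1)^mq^{9m^2+6m}$. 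Applying the Jacobi triple product to this theta series and simplifying the resulting infinite product, I would recognise it as exactly $b(q^3)=\frac{(q^3;q^3)_\infty(q^{18};q^{18})_\infty^2}{(q^6;q^6)_\infty(q^9;q^9)_\infty}$, which yields $\phi(-q)=a(q^3)-2qb(q^3)$.

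For the second identity the key observation is that $a(q^3)$ and $b(q^3)$ are power series in $q^3$, hence invariant under $q\mapsto\omega q$ with $\omega=e^{2\pi i/3}$. Applying the first identity at $q$, $\omega q$ and $\omega^2q$ therefore produces the three expressions $a(q^3)-2\omega^jq\,b(q^3)$, and multiplying them I would use $\prod_{j=0}^2(A-\omega^ju)=A^3-u^3$ with $A=a(q^3)$ and $u=2q\,b(q^3)$ to obtain $\phi(-q)\phi(-\omega q)\phi(-\omega^2q)=a(q^3)^3-8q^3b(q^3)^3$. It then remains to evaluate the left-hand product in closed form. Writing $\phi(-q)=\frac{(q;q)_\infty^2}{(q^2;q^2)_\infty}$ and using the factorisation $\prod_{j=0}^2(1-\omega^{jn}q^n)=(1-q^n)^3$ when $3\mid n$ and $=1-q^{3n}$ otherwise, the product of the numerators collapses to $\frac{(q^3;q^3)_\infty^4}{(q^9;q^9)_\infty}$ and that of the denominators to $\frac{(q^6;q^6)_\infty^4}{(q^{18};q^{18})_\infty}$; assembling these I would identify $\phi(-q)\phi(-\omega q)\phi(-\omega^2q)=\frac{\phi(-q^3)^4}{\phi(-q^9)}$. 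Equating the two evaluations and replacing $q^3$ by $q$ then gives $a(q)^3-8qb(q)^3=\frac{\phi(-q)^4}{\phi(-q^3)}$.

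I expect the cubic relation to be the main obstacle: the dissection in the first part is routine once the residues are separated, whereas the second part hinges on the non-obvious idea of forming the norm $\prod_j\phi(-\omega^jq)$, and the delicate step is the bookkeeping in the cube-root-of-unity factorisation of the infinite products, where one must track precisely which factors survive according to divisibility of the exponent by $3$ and by $9$ in order to land on the eta-quotient shape of $\phi(-q^3)^4/\phi(-q^9)$. Should that product manipulation prove unwieldy, a fallback is to recast all three terms as eta-quotients of weight $3/2$, verify via Gordon--Hughes (Theorem \ref{eta-quotient}) and Martin's cusp criterion (Theorem \ref{order of cusp}) that they are holomorphic modular forms on a common $\Gamma_0(N)$ with the same character, and then confirm the identity by matching $q$-coefficients up to the Sturm bound.
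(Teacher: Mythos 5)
Your argument is correct and complete in outline: the $3$-dissection of $\phi(-q)=\sum(-1)^nq^{n^2}$ by residues of $n$ modulo $3$ does give $\phi(-q^9)-2q\sum_m(-1)^mq^{9m^2+6m}$, the Jacobi triple product identifies the latter theta series as $(q^3;q^{18})_\infty(q^{15};q^{18})_\infty(q^{18};q^{18})_\infty=b(q^3)$, and the norm $\prod_{j=0}^2\phi(-\omega^jq)$ evaluates both to $a(q^3)^3-8q^3b(q^3)^3$ and, via the factorisation of $\prod_j(1-\omega^{jn}q^n)$ according to $3\mid n$ or not, to $\phi(-q^3)^4/\phi(-q^9)$. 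Note that the paper offers no proof of this lemma at all — it is quoted directly from Andrews--Hirschhorn--Sellers \cite{andrews2010arithmetic} — so your write-up supplies a self-contained derivation by what is in fact the standard route; the fallback via Sturm's bound is unnecessary here since the product bookkeeping goes through cleanly.
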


\noindent
We will begin by proving the following theorem, which provides important dissections.

\begin{theorem}
    \label{dissections mod 3}
    \begin{equation}
        \abovedisplayskip=1em
        \notag
        \sum_{n=0}^\infty\bar p(3n)q^n=\frac{\qPochhammer{q^2}{q^2}{4}\qPochhammer{q^3}{q^3}{6}}{\qPochhammer{q}{q}{8}\qPochhammer{q^6}{q^6}{3}},
        \belowdisplayskip=1em
    \end{equation}
    \begin{equation}
        \abovedisplayskip=1em
        \notag
        \sum_{n=0}^\infty\bar p(3n+1)q^n=2\frac{\qPochhammer{q^2}{q^2}{3}\qPochhammer{q^3}{q^3}{3}}{\qPochhammer{q}{q}{7}},
        \belowdisplayskip=1em
    \end{equation}
    \begin{equation}
        \abovedisplayskip=1em
        \notag
        \sum_{n=0}^\infty\bar p(3n+2)q^n=4\frac{\qPochhammer{q^2}{q^2}{2}\qPochhammer{q^6}{q^6}{3}}{\qPochhammer{q}{q}{6}}.
        \belowdisplayskip=1em
    \end{equation}
\end{theorem}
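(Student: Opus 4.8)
The plan is to reduce the whole statement to the two identities of Lemma \ref{Lemma AHS} together with a single algebraic rationalization. The starting observation is that the overpartition generating function has a remarkably clean form as a reciprocal theta function: since $\qPochhammer{-q}{q}{}=\qPochhammer{q^2}{q^2}{}/\qPochhammer{q}{q}{}$, we get
\begin{equation}
\notag
\sum_{n=0}^\infty\bar p(n)q^n=\frac{\qPochhammer{-q}{q}{}}{\qPochhammer{q}{q}{}}=\frac{\qPochhammer{q^2}{q^2}{}}{\qPochhammer{q}{q}{2}}=\frac{1}{\phi(-q)},
\end{equation}
using the Jacobi-triple-product evaluation $\phi(-q)=\qPochhammer{q}{q}{2}/\qPochhammer{q^2}{q^2}{}$. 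It is this single reciprocal that I would 3-dissect.

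Next I would substitute the first identity of Lemma \ref{Lemma AHS}, namely $\phi(-q)=a(q^3)-2qb(q^3)$, and then rationalize the denominator via $A^3-B^3=(A-B)(A^2+AB+B^2)$ with $A=a(q^3)$ and $B=2qb(q^3)$:
\begin{equation}
\notag
\sum_{n=0}^\infty\bar p(n)q^n=\frac{1}{a(q^3)-2qb(q^3)}=\frac{a(q^3)^2+2q\,a(q^3)b(q^3)+4q^2b(q^3)^2}{a(q^3)^3-8q^3b(q^3)^3}.
\end{equation}
The crucial point is that the new denominator $a(q^3)^3-8q^3b(q^3)^3$ is a power series in $q^3$ alone; indeed the second identity of Lemma \ref{Lemma AHS}, applied at $q^3$, shows it equals $\phi(-q^3)^4/\phi(-q^9)$. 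Hence the three summands in the numerator, carrying the factors $q^0$, $q^1$, $q^2$ times series in $q^3$, isolate exactly the residue classes $0,1,2$ modulo $3$.

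Extracting each class and then replacing $q^3$ by $q$ yields
\begin{gather*}
\sum_{n=0}^\infty\bar p(3n)q^n=\frac{a(q)^2\phi(-q^3)}{\phi(-q)^4},\qquad
\sum_{n=0}^\infty\bar p(3n+1)q^n=\frac{2a(q)b(q)\phi(-q^3)}{\phi(-q)^4},\\
\sum_{n=0}^\infty\bar p(3n+2)q^n=\frac{4b(q)^2\phi(-q^3)}{\phi(-q)^4}.
\end{gather*}
The final step is pure bookkeeping: I would substitute the product expansions $a(q)=\phi(-q^3)=\qPochhammer{q^3}{q^3}{2}/\qPochhammer{q^6}{q^6}{}$, $b(q)=\qPochhammer{q}{q}{}\qPochhammer{q^6}{q^6}{2}/\bracket{\qPochhammer{q^2}{q^2}{}\qPochhammer{q^3}{q^3}{}}$, and $\phi(-q)=\qPochhammer{q}{q}{2}/\qPochhammer{q^2}{q^2}{}$, then collect the exponent of each $\qPochhammer{q^\delta}{q^\delta}{}$ to recover the three claimed $\eta$-quotients.

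I do not anticipate a genuine obstacle: the argument is entirely driven by Lemma \ref{Lemma AHS}, and the only place demanding care is the rationalization, where one must confirm that the denominator is genuinely a series in $q^3$ so that the dissection is clean — this is precisely what the second AHS identity guarantees. The remaining product simplifications are routine but error-prone, so I would double-check each exponent of $\qPochhammer{q}{q}{}$, $\qPochhammer{q^2}{q^2}{}$, $\qPochhammer{q^3}{q^3}{}$, and $\qPochhammer{q^6}{q^6}{}$ against the stated formulas.
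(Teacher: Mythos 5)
Your proposal is correct and follows essentially the same route as the paper: both start from $\sum\bar p(n)q^n=1/\phi(-q)$, apply the first identity of Lemma \ref{Lemma AHS} and rationalize via $A^3-B^3=(A-B)(A^2+AB+B^2)$, and use the second identity to identify the denominator as a series in $q^3$ before extracting residue classes. The only cosmetic difference is that the paper works with $q^{1/3}$ rather than substituting $q\mapsto q^3$ into the lemma, and your final product expansions check out against the stated $\eta$-quotients.
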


\begin{corollary}
    $$\modulo{\bar p(3n+2)}{0}{4}.$$
\end{corollary}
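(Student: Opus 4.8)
The plan is to read the result directly off the third dissection in Theorem~\ref{dissections mod 3}. That identity expresses $\sum_{n=0}^\infty\bar p(3n+2)q^n$ as $4$ times the $q$-series $\frac{\qPochhammer{q^2}{q^2}{2}\qPochhammer{q^6}{q^6}{3}}{\qPochhammer{q}{q}{6}}$, so it suffices to show that this complementary series has integer coefficients; then every coefficient $\bar p(3n+2)$ is $4$ times an integer, which is exactly the claim.

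To verify integrality I would argue as follows. Each factor $\qPochhammer{q^k}{q^k}{}=\prod_{m\geq1}(1-q^{km})$ is a formal power series in $\mathbb{Z}[[q]]$ with constant term $1$. Hence the numerator $\qPochhammer{q^2}{q^2}{2}\qPochhammer{q^6}{q^6}{3}$, being a finite product of such series, again lies in $\mathbb{Z}[[q]]$. For the denominator, since $\qPochhammer{q}{q}{}$ has constant term $1$, its formal reciprocal $1/\qPochhammer{q}{q}{}$ also lies in $\mathbb{Z}[[q]]$ (indeed it equals $\sum_{n\geq0}p(n)q^n$), so $1/\qPochhammer{q}{q}{6}$ does as well. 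Multiplying these integral series together shows the bracketed factor lies in $\mathbb{Z}[[q]]$.

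Combining the two observations, we obtain $\bar p(3n+2)=4\,c_n$ with $c_n\in\mathbb{Z}$ for every $n$, which yields $\modulo{\bar p(3n+2)}{0}{4}$. There is essentially no obstacle in this argument: the whole content is the explicit factor of $4$ produced by Theorem~\ref{dissections mod 3}, and the only point requiring checking—integrality of the remaining series—is routine, since reciprocals of integral power series with unit constant term stay integral.
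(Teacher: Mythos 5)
Your argument is correct and matches the paper's intent: the corollary is read off directly from the third identity of Theorem~\ref{dissections mod 3}, where the explicit factor of $4$ multiplies a series in $\mathbb{Z}[[q]]$ (the proof environment printed after the corollary in the paper is really the proof of the dissection theorem itself, with the corollary treated as immediate). Your extra remark on why $1/\qPochhammer{q}{q}{6}$ has integer coefficients is a routine but correctly handled point.
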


\begin{proof}
    By \cite[Theorem 1.2.]{oliver2013eta} we have
    \begin{equation}
        \abovedisplayskip=1em
        \notag
        \sum_{n=0}^\infty\bar p(n)q^n=\frac{\eta(2z)}{\eta^2(z)}=\frac{1}{\phi(-q)}.
        \belowdisplayskip=1em
    \end{equation}

    \noindent
    Thus by Lemma \ref{Lemma AHS} we have
    \begin{equation}
        \abovedisplayskip=1em
        \notag
        \sum_{n=0}^\infty\bar p(n)q^{\frac n3}=\frac{1}{\phi(-q^{1/3})}=\frac{1}{a(q)-2q^{1/3}b(q)}=\frac{a(q)^2+2q^{1/3}a(q)b(q)+4q^{2/3}b(q)^2}{a(q)^3-8qb(q)^3}.
        \belowdisplayskip=1em
    \end{equation}

    \noindent
    By comparing the coefficients on both sides, we obtain
    \begin{equation}
        \abovedisplayskip=1em
        \notag
        \sum_{n=0}^\infty\bar p(3n)q^n=\frac{a(q)^2}{a(q)^3-8qb(q)^3}=\frac{\qPochhammer{q^2}{q^2}{4}\qPochhammer{q^3}{q^3}{6}}{\qPochhammer{q}{q}{8}\qPochhammer{q^6}{q^6}{3}},
        \belowdisplayskip=1em
    \end{equation}
    \begin{equation}
        \abovedisplayskip=1em
        \notag
        \sum_{n=0}^\infty\bar p(3n+1)q^n=\frac{2a(q)b(q)}{a(q)^3-8qb(q)^3}=2\frac{\qPochhammer{q^2}{q^2}{3}\qPochhammer{q^3}{q^3}{3}}{\qPochhammer{q}{q}{7}},
        \belowdisplayskip=1em
    \end{equation}
    \begin{equation}
        \abovedisplayskip=1em
        \notag
        \sum_{n=0}^\infty\bar p(3n+2)q^n=\frac{4b(q)^2}{a(q)^3-8qb(q)^3}=4\frac{\qPochhammer{q^2}{q^2}{2}\qPochhammer{q^6}{q^6}{3}}{\qPochhammer{q}{q}{6}}.
        \belowdisplayskip=1em
    \end{equation}
\end{proof}

\noindent
Now we are able to prove Theorem \ref{Mod 3 congruences} (1).

\begin{proof}[Proof of Theorem \ref{Mod 3 congruences} (1)]
    Note that
    \begin{equation}
        \abovedisplayskip=1em
        \notag
        \sum_{n=0}^\infty\bar p(3n)q^n=\frac{\qPochhammer{q^2}{q^2}{4}\qPochhammer{q^3}{q^3}{6}}{\qPochhammer{q}{q}{8}\qPochhammer{q^6}{q^6}{3}}\equiv_3\frac{\qPochhammer{q}{q}{10}}{\qPochhammer{q^2}{q^2}{5}}=\phi^5(-q).
        \belowdisplayskip=1em
    \end{equation}

    \noindent
    It is well known that $\phi(q)\in M_{\frac{1}{2}}(\Gamma_0(4))$ (for example, see \cite[Proposition 1.41]{ono2004web}). Since $\phi(-q) = 2\phi(q)\ |\ U(2)\ |\ V(2) - \phi(q)$, by Proposition \ref{U,V operators for half-integral weight modular forms}, we obtain
    \begin{equation}
        \abovedisplayskip=1em
        \notag
        \phi(-q)\in M_{\frac12}(\Gamma_0(16)).
        \belowdisplayskip=1em
    \end{equation}

    \noindent
    So
    \begin{equation}
        \abovedisplayskip=1em
        \notag
        \phi^5(q)=\sum_{n=0}^\infty r_5(n)q^n\in M_{\frac52}(\Gamma_0(4)),
        \belowdisplayskip=1em
    \end{equation}

    \noindent
    where $r_s(n)$ denotes the number of ways to express $n$ as the sum of $s$ squares. By \cite[Lemma 5.1.]{cooper2002sums} we know that for each odd prime $Q$,
    \begin{equation}
        \abovedisplayskip=1em
        \label{r_5 is eigenform}
        r_5(Q^2n)+Q\bracket{\frac nQ}r_5(n)+Q^3r_5\bracket{\frac{n}{Q^2}}=(Q^3+1)r_5(n),
        \belowdisplayskip=1em
    \end{equation}

    \noindent
    thus
    \begin{equation}
        \abovedisplayskip=1em
        \notag
        (-1)^{Q^2n}r_5(Q^2n)+(-1)^nQ\bracket{\frac nQ}r_5(n)+(-1)^{n/Q^2}Q^3r_5\bracket{\frac{n}{Q^2}}=(-1)^n(Q^3+1)r_5(n),
        \belowdisplayskip=1em
    \end{equation}

    \noindent
    where $(-1)^{n/Q^2}=0$ if $Q^2\nmid n$. The equation above is equivalent to
    \begin{equation}
        \abovedisplayskip=1em
        \notag
        \phi^5(-q)\ |\ T(Q^2)=(Q^3+1)\phi^5(-q).
        \belowdisplayskip=1em
    \end{equation}

    \noindent
    So if $\modulo{Q}{5}{6}$, we have $\modulo{\phi^5(-q)\ |\ T(Q^2)}{0}{3}$, hence
    \begin{equation}
        \abovedisplayskip=1em
        \notag
        \modulo{\sum_{n=0}^\infty\bar p(3n)q^n\ |\ T(Q^2)}{0}{3},
        \belowdisplayskip=1em
    \end{equation}

    \noindent
    i.e.
    \begin{equation}
        \abovedisplayskip=1em
        \notag
        \modulo{\bar p(3Q^2n)+Q\bracket{\frac nQ}\bar p(n)+Q^3\bar p\bracket{\frac{n}{Q^2}}}{0}{3}
        \belowdisplayskip=1em
    \end{equation}

    \noindent
    for all $n$. Let $n=Qn$ with $(n,Q)=1$ to eliminate the latter two parts, obtaining
    \begin{equation}
        \abovedisplayskip=1em
        \notag
        \modulo{\bar p(3Q^3n)}{0}{3}
        \belowdisplayskip=1em
    \end{equation}

    \noindent
    for all $n$ coprime to $Q$.
\end{proof}

\noindent
The proof for Theorem \ref{Mod 3 congruences} (2) is more complicated. First, we need a theorem by Sturm \cite[Theorem 1]{sturm1987congruence}, which offers a useful criterion for determining when modular forms with integer coefficients become congruent to zero modulo a prime through finite computation. We will use the refined version of Sturm's Theorem, as presented in \cite[Theorem 2.58]{ono2004web}, which also applies to half-integral weight modular forms.

\begin{theorem}[Sturm]
\label{Sturm's theorem}
Suppose $f(z)=\sum_{n=0}^\infty a(n)q^n\in M_{\frac k2}(\Gamma_0(N),\chi)_m$ such that
$$a(n)\equiv0\ (\mathrm{mod}\ m)$$

\noindent
for all $n\leq \frac{kN}{24}\prod_{p|N}\left( 1+\frac1p \right)$. Then $a(n)\equiv0\ (\mathrm{mod}\ m)$ for all $n\in\mathbb{Z}$.
\end{theorem}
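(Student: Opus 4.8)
The plan is to deduce the theorem from the classical \emph{valence formula}, which bounds how high the order of vanishing of a nonzero modular form can be at a single cusp. Write the weight as $w=k/2$ and set $\mu=[\mathrm{SL}_2(\mathbb{Z}):\Gamma_0(N)]=N\prod_{p\mid N}(1+\tfrac1p)$, so that the quantity appearing in the statement is exactly $B=\frac{w\mu}{12}=\frac{kN}{24}\prod_{p\mid N}(1+\tfrac1p)$. Over $\mathbb{C}$ the governing fact is that a nonzero holomorphic form of weight $w$ on $\Gamma_0(N)$ is a section of a line bundle of degree $\frac{w\mu}{12}$ on the modular curve $X_0(N)$; its zeros, counted with multiplicity over $X_0(N)$, therefore total $\frac{w\mu}{12}$, and in particular its order of vanishing at the cusp $\infty$ is at most $B$. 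Thus a form vanishing to order exceeding $B$ at $\infty$ must vanish identically, and the whole theorem asserts that this principle survives reduction modulo $m$.

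I would first dispose of the half-integral case by squaring. If $f=\sum a(n)q^n\in M_{k/2}(\Gamma_0(N),\chi)_m$ satisfies $a(n)\equiv_m 0$ for all $n\le B$, then $f^2$ is a form of integral weight $k$ on $\Gamma_0(N)$, and the coefficient of $q^{M}$ in $f^2$ equals $\sum_{i+j=M}a(i)a(j)$. For $M\le 2B$ every term in this sum has $\min(i,j)\le M/2\le B$, so one of $a(i),a(j)$ vanishes modulo $m$ and hence the whole coefficient does; that is, the first $2B$ coefficients of $f^2$ vanish modulo $m$. Since $2B=\frac{k\mu}{12}$ is precisely the integral Sturm bound for weight $k$ and level $N$, the integral-weight case applied to $f^2$ yields $f^2\equiv_m 0$, and therefore $f\equiv_m 0$ because $\mathbb{F}_m[[q]]$ is an integral domain. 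This reduces everything to the integral-weight statement, which is the real content.

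For integral weight my approach is to transfer to level one. Choosing coset representatives $\gamma_1=\mathrm{id},\gamma_2,\dots,\gamma_\mu$ for $\Gamma_0(N)\backslash\mathrm{SL}_2(\mathbb{Z})$, the norm $g=\prod_{i=1}^{\mu} f|_{w}\gamma_i$ is a weight-$w\mu$ form on the full modular group whose $q$-expansion has algebraic-integer coefficients, and $\mathrm{ord}_\infty(g)=\sum_i \mathrm{ord}_\infty(f|_w\gamma_i)\ge \mathrm{ord}_\infty(f)$ because every factor is holomorphic. One then needs the level-one statement modulo a prime $\mathfrak{m}$ above $m$: a nonzero reduction of a weight-$W$ form on $\mathrm{SL}_2(\mathbb{Z})$ has $q$-adic order at most $W/12$. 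I would establish this through the arithmetic theory of modular forms modulo $m$ — normalising weights using the Hasse invariant $E_{m-1}\equiv_m 1$, exploiting the filtration together with the structure $\mathbb{F}_m[E_4,E_6]$, and comparing against $\Delta=\eta^{24}$, which has a simple zero at $\infty$ and no other zeros. Combining the two inputs gives $\mathrm{ord}_q(\bar f)\le \mathrm{ord}_q(\bar g)\le \frac{w\mu}{12}=B$ whenever $\bar f\ne 0$, so the hypothesis that the first $B$ coefficients vanish modulo $m$ forces $\bar f=0$.

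The step I expect to be the genuine obstacle is this passage to level one modulo $m$: one must control the reductions of the conjugate factors $f|_w\gamma_i$ so that the norm does not collapse modulo $\mathfrak{m}$, and, above all, one must prove the characteristic-$m$ order-of-vanishing bound, since the valence formula is an analytic statement over $\mathbb{C}$ whose mod-$m$ analogue rests on the Hasse invariant and the theory of filtrations. The small characteristics $m=2,3$, where $E_{m-1}$ is not a holomorphic modular form, would need separate handling, for instance by arguing directly with $E_4$ and $E_6$ or by passing to a convenient auxiliary level.
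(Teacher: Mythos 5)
The paper does not prove this statement: it is quoted as a known result (Sturm's theorem in the refined form of \cite[Theorem 2.58]{ono2004web}), so there is no internal proof to measure yours against. Judged on its own, your write-up is a faithful outline of Sturm's original strategy, and one piece of it is genuinely complete and correct: the reduction of the half-integral case to the integral case by squaring. The convolution estimate showing that the first $2B$ coefficients of $f^2$ vanish modulo $m$, the identification of $2B=\frac{k\mu}{12}$ with the integral-weight bound for weight $k$, and the final step $\bar f^2=0\Rightarrow\bar f=0$ (valid since $m$ is prime, so $\mathbb{F}_m[[q]]$ is a domain) are all right.

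That said, the two steps you defer are not peripheral technicalities --- they are essentially the entire content of the theorem --- so as written this is a plan rather than a proof. Both can be closed roughly as you anticipate. The ``collapse'' of the norm is handled by normalization: extend $K$ so that each conjugate $f|_w\gamma_i$ has coefficients in $K(\zeta_N)$, then divide each factor by a suitable power of a uniformizer at $\mathfrak{m}$ so that it becomes primitive; each reduction is then a nonzero element of the integral domain $\overline{\mathbb{F}}_m[[q^{1/N}]]$, orders at $\infty$ add, and $\mathrm{ord}_q(\bar g)\ge\mathrm{ord}_q(\bar f)$ follows (one must also neutralize the character, e.g.\ by working on $\Gamma_1(N)$ or raising to a power killing the finite-order multiplier of the norm). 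For the level-one bound in characteristic $m$, the Hasse-invariant/filtration route you sketch fails at $m=3$ --- which matters here, since the paper invokes the theorem precisely at $m=3$ in the proof of Theorem \ref{Mod 3 congruences} (2). The uniform fix is the integral (Miller) basis of $M_W(\mathrm{SL}_2(\mathbb{Z}))$: monomials in $E_4$, $E_6$, $\Delta$ furnish a $\mathbb{Z}$-basis $g_0,\dots,g_d$ with $g_j=q^j+O(q^{d+1})$ and $d\le W/12$, and solving the resulting unipotent triangular system shows directly that a form with $\mathfrak{m}$-integral coefficients whose first $d+1$ coefficients vanish modulo $\mathfrak{m}$ vanishes identically modulo $\mathfrak{m}$, in every characteristic. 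With those two repairs your argument becomes a complete proof.
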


Now, let us briefly introduce the famous Shimura lift \cite{shimura1973modular}. The result we use is from \cite{niwa1975modular} (also referred to in \cite[Theorem 3.14]{ono2004web}).

\begin{theorem}[Niwa-Shimura]
    Suppose that $g(z)=\sum_{n=1}^\infty b(n)q^n\in S_{\lambda+\frac12}(\Gamma_0(4N),\chi)$ with $\lambda\geq1$. Let $t$ be a positive square-free integer, and define the Dirichlet character $\psi_t$ by $\psi_t(n)=\chi(n)\bracket{\frac{-1}{n}}^\lambda\bracket{\frac tn}$. If complex numbers $A_t(n)$ are defined by
    \begin{equation}
        \abovedisplayskip=1em
        \notag
        \sum_{n=1}^\infty\frac{A_t(n)}{n^s}:=\sum_{n=1}^\infty\frac{\psi_t(n)}{n^{s-\lambda+1}}\cdot\sum_{n=1}^\infty\frac{b(tn^2)}{n^s},
        \belowdisplayskip=1em
    \end{equation}

    \noindent
    then
    \begin{equation}
        \abovedisplayskip=1em
        \notag
        \Sh{t}(g(z)):=\sum_{n=1}^\infty A_t(n)q^n
        \belowdisplayskip=1em
    \end{equation}
    
    \noindent
    is a modular form in $M_{2\lambda}(\Gamma_0(2N), \chi^2)$. If $\lambda\geq 2$, then $\Sh{t}(g(z))$ is a cusp form.
\end{theorem}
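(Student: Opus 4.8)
Because the final statement is the classical Shimura correspondence, a proof proposal amounts to recalling the strategy by which modularity of $\Sh{t}(g(z))$ is extracted from its defining Dirichlet series. The coefficients $A_t(n)$ are presented only through their generating Dirichlet series, so there is no closed form to manipulate directly; what is accessible is the analytic behaviour of
$$L(s):=\sum_{n=1}^\infty\frac{A_t(n)}{n^s}=L(s-\lambda+1,\psi_t)\cdot D(s),\qquad D(s):=\sum_{n=1}^\infty\frac{b(tn^2)}{n^s}.$$
The plan is therefore to establish analytic continuation and a functional equation for $L(s)$ together with all of its Dirichlet twists, and then to invoke Weil's converse theorem to conclude that $\sum_{n}A_t(n)q^n$ is a modular form of weight $2\lambda$, level $2N$, and character $\chi^2$. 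This is Shimura's original route; Niwa's contribution is an alternative realisation of the same lift as a theta integral, which I note at the end.

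The analytic input comes from a Rankin--Selberg computation. First I would form the product of $g(z)$ with the weight-$\tfrac12$ theta series $\theta_t(z)=\sum_{m}q^{tm^2}$ (more precisely a theta series attached to $\psi_t$), so that its Fourier coefficients isolate the values $b(tm^2)$. Integrating the product $g(z)\overline{\theta_t(z)}$ against a suitable real-analytic Eisenstein series $E(z,s)$ over $\Gamma_0(4N)\backslash\mathcal H$ and unfolding the Eisenstein series yields, up to an explicit $\Gamma$-factor, precisely $L(s-\lambda+1,\psi_t)\,D(s)=L(s)$. Because $E(z,s)$ has meromorphic continuation in $s$ and a functional equation relating $s$ and $1-s$, these properties transfer to $L(s)$: one obtains the continuation of the completed function $\Lambda(s):=(2\pi)^{-s}\Gamma(s)L(s)$ and a functional equation of the shape $\Lambda(s)=\varepsilon\,\Lambda(2\lambda-s)$, the shift $2\lambda$ being dictated by the weights entering the Rankin--Selberg integral.

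To apply Weil's converse theorem I would next produce the same data for all the twists $L(s,\xi)=\sum_{n}\xi(n)A_t(n)n^{-s}$ by Dirichlet characters $\xi$ of conductor coprime to $2N$. These arise by rerunning the Rankin--Selberg computation with $\theta_t$ replaced by its $\xi$-twist (equivalently, by inserting $\xi$ into the Eisenstein series), which again returns a completed $L$-function with a functional equation, now of conductor $2N\,\mathrm{cond}(\xi)^2$ and root number built from the Gauss sum of $\xi$. Weil's converse theorem then forces the Fourier series $\Sh{t}(g(z))=\sum_{n}A_t(n)q^n$ to lie in $M_{2\lambda}(\Gamma_0(2N),\chi^2)$; the character is $\chi^2$ because the half-integral character $\chi$ and the quadratic characters coming from the Weil representation of $\theta_t$ combine on the integral-weight side to $\chi^2$. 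When $\lambda\geq2$ the relevant Eisenstein series is holomorphic at the edge of the critical strip and the Rankin--Selberg integral converges, which translates into rapid decay of $\Sh{t}(g(z))$ at every cusp, so the lift is a cusp form.

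The main obstacle is the uniformity of the twisted functional equations: one must track the $\Gamma$-factors, root numbers, and especially the conductors across all $\xi$ simultaneously, and verify that they match exactly the pattern required by the converse theorem, with conductor precisely $2N$ rather than a proper multiple. The half-integral-weight Weil-representation bookkeeping for $\theta_t$ and the correct normalisation of the half-integral Eisenstein series are where the delicate calculations concentrate. An approach that avoids the converse theorem entirely is Niwa's: construct a two-variable theta kernel $\Theta(z,\tau)$ transforming with weight $2\lambda$ in $z$ and weight $\lambda+\tfrac12$ in $\tau$, define $\Sh{t}(g)$ as the Petersson integral of $g(\tau)$ against $\Theta(z,\tau)$ — which is automatically modular of weight $2\lambda$ in $z$ — and then compute its Fourier expansion by unfolding to recover the arithmetic coefficients $A_t(n)$; there the difficulty migrates to building the kernel with the correct transformation law and evaluating the unfolding integral.
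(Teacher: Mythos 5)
The paper does not actually prove this statement: it is imported verbatim from the literature (Shimura's paper, Niwa's paper, and Theorem 3.14 of Ono's book are cited), so there is no internal proof to measure your proposal against. Judged on its own terms, your outline correctly identifies the two classical routes --- Shimura's Rankin--Selberg-plus-Weil-converse-theorem argument and Niwa's theta-kernel construction --- and for the purposes of this paper a citation is all that is needed.

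As a proof of the theorem \emph{as stated}, however, your primary route has a genuine gap at exactly the point you flag as ``the main obstacle.'' The converse-theorem argument is Shimura's original one, and it does \emph{not} yield the level $2N$ appearing in the statement: Shimura obtained modularity only at some larger level and \emph{conjectured} that the lift lives on $\Gamma_0(2N)$; pinning down the level as $2N$ is precisely Niwa's contribution, via the theta-kernel realisation that you mention only as an optional alternative and do not develop. So to prove the quoted theorem one must actually carry out the kernel construction (build $\Theta(z,\tau)$ with the stated transformation behaviour in $z$ for $\Gamma_0(2N)$ and character $\chi^2$, and unfold the Petersson integral to recover the $A_t(n)$), rather than treat it as a remark. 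Separately, your justification of cuspidality for $\lambda\geq 2$ --- convergence of the Rankin--Selberg integral implying rapid decay at the cusps --- is a heuristic, not an argument; the genuine proof requires examining the constant terms of $\Sh{t}(g(z))$ at every cusp, and the fact that the conclusion fails for $\lambda=1$ (weight $\tfrac32$ forms can lift to Eisenstein series) shows that some $\lambda$-dependent input is indispensable and must be made explicit.
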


\noindent
With tools above, we are able to prove Theorem \ref{Mod 3 congruences} (2).

\begin{proof}[Proof of Theorem \ref{Mod 3 congruences} (2)]
    By Theorem \ref{dissections mod 3} we have
    \begin{equation}
        \abovedisplayskip=1em
        \notag
        \sum_{n=0}^\infty\bar p(3n+1)q^{3n+1}=2q\frac{\qPochhammer{q^6}{q^6}{3}\qPochhammer{q^9}{q^9}{3}}{\qPochhammer{q^3}{q^3}{7}}\equiv_32q\qPochhammer{q}{q}{6}\qPochhammer{q^2}{q^2}{9}=2\eta^6(z)\eta^9(2z).
        \belowdisplayskip=1em
    \end{equation}

    \noindent
    By a table listed in \cite{aydin2023hecke}, we know that for each odd prime $Q$, $\eta^6(z)\eta^9(2z)\in S_{\frac{15}{2}}(\Gamma_0(16))$ is a Hecke eigenform for operator $T(Q^2)$. Let
    \begin{equation}
        \abovedisplayskip=1em
        \notag
        \eta^6(z)\eta^9(2z)=\sum_{n=1}^\infty b(n)q^n,
        \belowdisplayskip=1em
    \end{equation}

    \noindent
    then for each odd prime $Q$,
    \begin{equation}
        \abovedisplayskip=1em
        \notag
        \sum_{n=1}^\infty \bracket{b(Q^2n)+\bracket{\frac{-n}{Q}}Q^6b(n)+Q^{13}b\bracket{\frac{n}{Q^2}}}q^n=\mu(Q)\sum_{n=1}^\infty b(n)q^n,
        \belowdisplayskip=1em
    \end{equation}

    \noindent
    for some constant $\mu(Q)$. By comparing the coefficient of the term $q^1$, we obtain
    \begin{equation}
        \abovedisplayskip=1em
        \notag
        b(Q^2)+\bracket{\frac{-1}{Q}}Q^6b(1)=\mu(Q)b(1).
        \belowdisplayskip=1em
    \end{equation}

    \noindent
    Since $b(1)=1$, we have
    \begin{equation}
        \abovedisplayskip=1em
        \notag
        \mu(Q)=b(Q^2)+\bracket{\frac{-1}{Q}}Q^6.
        \belowdisplayskip=1em
    \end{equation}

    \noindent
    Unfortunately, we have less information about $b(Q^2)$. Next, we will use the Shimura lift to study the properties of $b(Q^2)$. Let
    \begin{equation}
        \abovedisplayskip=1em
        \notag
        \Sh{1}(\eta^6(z)\eta^9(2z))=\sum_{n=1}^\infty A_1(n)q^n\in S_{14}(\Gamma_0(8)).
        \belowdisplayskip=1em
    \end{equation}

    \noindent
    By definition, we know that
    \begin{equation}
        \abovedisplayskip=1em
        \notag
        \begin{aligned}
            A_1(n)&=\sum_{d\mid n}\bracket{\frac{-1}{d}}\chi_0(d)d^6b\bracket{\frac{n^2}{d^2}} && (\mathrm{where}\ \chi_0\ \mathrm{is\ the\ trivial\ character\ mod}\ 16) \\
            &=\sum_{\genfrac{}{}{0pt}{}{d\mid n}{d\ \mathrm{odd}}}\bracket{\frac{-1}{d}}d^6b\bracket{\frac{n^2}{d^2}}.
        \end{aligned}
        \belowdisplayskip=1em
    \end{equation}

    \noindent
    Note that
    \begin{equation}
        \abovedisplayskip=1em
        \notag
        A_1(Q)=b(Q^2)+\bracket{\frac{-1}{Q}}Q^6.
        \belowdisplayskip=1em
    \end{equation}

    \noindent
    Since our purpose is to study $A_1(Q)$, we do not need information for $A_1(n)$ with $(n,6)>1$, so we are going to cancel these terms. Precisely, we will study
    \begin{equation}
        \abovedisplayskip=1em
        \notag
        \sum_{\genfrac{}{}{0pt}{}{n=1}{\modulo{n}{1,5}{6}}}^\infty A_1(n)q^n.
        \belowdisplayskip=1em
    \end{equation}

    \noindent
    Note that
    \begin{equation}
        \abovedisplayskip=1em
        \label{decomposition of A_1(n)}
        \begin{aligned}
            &\ \ \ \ \sum_{\genfrac{}{}{0pt}{}{n=1}{\modulo{n}{1,5}{6}}}^\infty A_1(n)q^n\\
            &=\sum_{n=1}^\infty A_1(n)q^n-\sum_{\genfrac{}{}{0pt}{}{n=1}{\modulo{n}{0}{2}}}^\infty A_1(n)q^n-\sum_{\genfrac{}{}{0pt}{}{n=1}{\modulo{n}{0}{3}}}^\infty A_1(n)q^n+\sum_{\genfrac{}{}{0pt}{}{n=1}{\modulo{n}{0}{6}}}^\infty A_1(n)q^n,
        \end{aligned}
        \belowdisplayskip=1em
    \end{equation}

    \noindent
    where
    \begin{equation}
        \abovedisplayskip=1em
        \notag
        \sum_{\genfrac{}{}{0pt}{}{n=1}{\modulo{n}{0}{6}}}^\infty A_1(n)q^n=\sum_{n=1}^\infty A_1(6n)q^{6n}=\sum_{n=1}^\infty A_1(n)q^n\ |\ U(6)\ |\ V(6).
        \belowdisplayskip=1em
    \end{equation}

    \noindent
    If we view $\sum_{n=1}^\infty A_1(n)q^n$ as an element in $S_{14}(\Gamma_0(24))$, then
    \begin{equation}
        \abovedisplayskip=1em
        \notag
        \sum_{n=1}^\infty A_1(6n)q^{6n}=\sum_{n=1}^\infty A_1(n)q^n\ |\ U(6)\in S_{14}(\Gamma_0(24)).
        \belowdisplayskip=1em
    \end{equation}

    \noindent
    Hence
    \begin{equation}
        \abovedisplayskip=1em
        \notag
        \sum_{\genfrac{}{}{0pt}{}{n=1}{\modulo{n}{0}{6}}}^\infty A_1(n)q^n=\sum_{n=1}^\infty A_1(n)q^n\ |\ U(6)\ |\ V(6)\in S_{14}(\Gamma_0(144)).
        \belowdisplayskip=1em
    \end{equation}

    \noindent
    Similar argument can be applied to other terms of \eqref{decomposition of A_1(n)}. Finally, we obtain
    \begin{equation}
        \abovedisplayskip=1em
        \notag
        \sum_{\genfrac{}{}{0pt}{}{n=1}{\modulo{n}{1,5}{6}}}^\infty A_1(n)q^n\in S_{14}(\Gamma_0(144)).
        \belowdisplayskip=1em
    \end{equation}

    \noindent
    Use Theorem \ref{eta-quotient} and \ref{order of cusp}, we find that $\eta^4(6z)\in S_2(\Gamma_0(36))$. Since Eisenstein series
    \begin{equation}
        \abovedisplayskip=1em
        \notag
        E_4(z)=1+240\sum_{n=1}^\infty\sigma_3(n)q^n\in M_4(\Gamma_0(1))
        \belowdisplayskip=1em
    \end{equation}

    \noindent
    and $\modulo{E_4(z)}{1}{3}$, we have $\eta^4(6z)E_4^3(z)\in S_{14}(\Gamma_0(144))$. Thus $\eta^4(6z)\in S_{14}(\Gamma_0(144))_3$.

    Now we can apply Theorem \ref{Sturm's theorem} to show that
    \begin{equation}
        \abovedisplayskip=1em
        \label{Mod 3 relation}
        \modulo{\sum_{\genfrac{}{}{0pt}{}{n=1}{\modulo{n}{1,5}{6}}}^\infty A_1(n)q^n}{\eta^4(6z)}{3},
        \belowdisplayskip=1em
    \end{equation}

    \noindent
    well beyond the Sturm bound of $336$. Note that $\eta^4(6z)$ have nonzero coefficients only at $q^{6n+1}$ terms, we obtain $\modulo{A_1(n)}{0}{3}$ for all $\modulo{n}{5}{6}$. Thus if odd prime $\modulo{Q}{5}{6}$, we have
    \begin{equation}
        \abovedisplayskip=1em
        \notag
        \mu(Q)=b(Q^2)+\bracket{\frac{-1}{Q}}Q^6=A_1(Q)\equiv_30.
        \belowdisplayskip=1em
    \end{equation}

    \noindent
    Thus for prime $\modulo{Q}{5}{6}$,
    \begin{equation}
        \abovedisplayskip=1em
        \notag
        \modulo{\sum_{\genfrac{}{}{0pt}{}{n=0}{\modulo{n}{1}{3}}}^\infty\bar p(n)q^n\ |\ T(Q^2)}03.
        \belowdisplayskip=1em
    \end{equation}

    \noindent
    Or equivalently,
    \begin{equation}
        \abovedisplayskip=1em
        \notag
        \modulo{\bar p(Q^2n)+\bracket{\frac{-n}{Q}}Q^6\bar p(n)+Q^{13}\bar p\bracket{\frac{n}{Q^2}}}03
        \belowdisplayskip=1em
    \end{equation}

    \noindent
    satisfies for all $\modulo{n}{1}{3}$. Let $n=Qn$ with $(n,Q)=1$ and $\modulo{n}{-1}{3}$ to eliminate the latter two parts, obtaining
    \begin{equation}
        \abovedisplayskip=1em
        \notag
        \modulo{\bar p(Q^3n)}{0}{3}
        \belowdisplayskip=1em
    \end{equation}

    \noindent
    for all $(n,Q)=1$ and $\modulo{n}{-1}{3}$.
\end{proof}

\begin{remark}
    We can compute that $\dim S_{14}(\Gamma_0(8))=11$ (see, for example, \cite[Section 3.9]{diamond2005first}). Using Theorem \ref{eta-quotient} and Theorem \ref{order of cusp}, we can find a set of basis
    \begin{equation}
        \abovedisplayskip=1em
        \notag
        \begin{aligned}
            &\{\beta_1,\cdots,\beta_{11}\}=\\
            &\left\{
            \frac{\eta^{32}(z)}{\eta^4(2z)},\frac{\eta^{44}(4z)}{\eta^{16}(8z)},\eta^{20}(2z)\eta^8(4z),
            \eta^8(2z)\eta^{20}(4z),\frac{\eta^{32}(4z)}{\eta^4(2z)},\eta^{20}(4z)\eta^8(8z), \right.\\
            &\left. \eta^4(2z)\eta^8(4z)\eta^{16}(8z),\frac{\eta^{20}(4z)\eta^{16}(8z)}{\eta^8(2z)},
            \frac{\eta^8(4z)\eta^{24}(8z)}{\eta^4(2z)},\frac{\eta^{32}(8z)}{\eta^4(4z)},
            \frac{\eta^4(2z)\eta^{40}(8z)}{\eta^{16}(4z)}
            \right\}.
        \end{aligned}
        \belowdisplayskip=1em
    \end{equation}

    \noindent
    A straightforward computation reveals that
    \begin{equation}
        \abovedisplayskip=1em
        \notag
        \begin{aligned}
            \sum_{n=1}^\infty A_1(n)q^n&=\beta_1+96\beta_2-2304\beta_3-65536\beta_5-24576\beta_6+393216\beta_8-6291456\beta_{10}\\
           &\equiv_3 \beta_1-\beta_5.
        \end{aligned}
        \belowdisplayskip=1em
    \end{equation}

    \noindent
    Using the above congruence to prove \eqref{Mod 3 relation} is simpler than using the definition of the Shimura lift.
\end{remark}

\section{Modulo 7}

The case modulo $7$ is quite different from modulo $3$, since the explicit expression for $\sum\bar p(7n)q^n$ is very complicated and hard to analyse. First we need to show that $\sum\bar p(7n)q^n$ is a modular form, then we can use Sturm's Theorem to find a modular form which equals to $\sum\bar p(7n)q^n$ in the sense modulo $7$, so that we can avoid analyse the explicit expression for $\sum\bar p(7n)q^n$.

\begin{theorem}
    \label{sum bar p(mn) q^n is a modular form}
    For each prime $m\geq3$, let $m'=(m\ \mathrm{mod}\ 8)$, then
    \begin{equation}
        \abovedisplayskip=1em
        \notag
        \sum_{n=0}^\infty\bar p(mn)q^n\in M_{(8+m')\frac{m-1}{2}-\frac12}(\Gamma_0(16))_m.
        \belowdisplayskip=1em
    \end{equation}
\end{theorem}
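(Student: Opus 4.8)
The plan is to realize $\sum_{n\ge 0}\bar p(mn)q^n$ as the reduction of an explicit holomorphic form, by combining a Frobenius congruence for $\phi(-q)$ with the $U(m)$ operator and using the \emph{integral}-weight Hecke operator to pin the level down to $16$. First I would record the inputs. From the computation in the excerpt, $\sum\bar p(n)q^n=1/\phi(-q)$ and $\phi(-q)=\eta^2(z)/\eta(2z)\in M_{\frac12}(\Gamma_0(16))$. Since $(1-q^n)^m\equiv 1-q^{mn}\pmod m$, this eta-quotient satisfies the Frobenius congruence $\phi(-q)^m\equiv\phi(-q^m)\pmod m$. I would also isolate the elementary fact that a factor which is a power series in $q^m$ pulls through $U(m)$: for any $q$-series $f$ and any $G$,
\[
\bigl(f(q)\,G(q^m)\bigr)\,\big|\,U(m)=\bigl(f\,\big|\,U(m)\bigr)\,G(q).
\]

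Next, set $r:=8+m'$ and consider $\Phi:=\phi(-q)^{rm-1}$. Because $r$ and $m$ are both odd, $rm-1$ is \emph{even}, so $\Phi$ is an even power of a form in $M_{\frac12}(\Gamma_0(16))$ and hence an honest holomorphic modular form of \emph{integral} weight $k=(rm-1)/2$ on $\Gamma_0(16)$, with a quadratic nebentypus $\chi$ read off from Theorem \ref{eta-quotient}. Reducing mod $m$ and applying the Frobenius congruence gives $\Phi\equiv\phi(-q^m)^{r}\cdot\frac{1}{\phi(-q)}\pmod m$, and pulling the $q^m$-factor $\phi(-q^m)^r$ through $U(m)$ with the identity above yields
\[
\Phi\,\big|\,U(m)\ \equiv\ \phi(-q)^{r}\sum_{n\ge 0}\bar p(mn)q^n\pmod m.
\]
This is the engine of the proof, and it already forces the weight: dividing out $\phi(-q)^r$ (weight $r/2$) from a weight-$k$ object leaves weight $k-\tfrac r2=(8+m')\frac{m-1}{2}-\frac12$, the asserted weight.

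The integral power $rm-1$ was chosen precisely so that I may now invoke the integral-weight Hecke operator $T(m)$ from Section 2 to control the level. A priori $\Phi\,|\,U(m)\in M_k(\Gamma_0(16m),\chi)$ only (viewing $\Phi$ on $\Gamma_0(16m)$, where $m\mid 16m$, via the integral-weight $U,V$ proposition). But since $m\nmid 16$ and $k\ge 2$, the relation $\Phi\,|\,T(m)=\Phi\,|\,U(m)+\chi(m)m^{k-1}\,\Phi\,|\,V(m)$ shows $\Phi\,|\,U(m)\equiv\Phi\,|\,T(m)\pmod m$, while $g:=\Phi\,|\,T(m)$ is a holomorphic form of weight $k$ on $\Gamma_0(16)$ with integer coefficients. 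Thus the displayed congruence upgrades to $\phi(-q)^{r}\sum\bar p(mn)q^n\equiv g\pmod m$ with $g$ holomorphic on $\Gamma_0(16)$, level $16$ and not merely $16m$.

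It remains to divide by $\phi(-q)^{r}$. Over $\mathbb C$ the quotient $\tilde g:=g/\phi(-q)^{r}$ is a meromorphic modular form of weight $(8+m')\frac{m-1}{2}-\frac12$ with integer $q$-expansion (since $\phi(-q)$ has constant term $1$) and trivial character, and $\tilde g\equiv\sum\bar p(mn)q^n\pmod m$; holomorphy on $\mathcal H$ is automatic because $\eta$ never vanishes there. So everything reduces to checking holomorphy at the cusps of $\Gamma_0(16)$. Using Martin's formula (Theorem \ref{order of cusp}) I would verify that $\phi(-q)$ vanishes only at the cusp $0$, to order $1$, and has order $0$ at every other cusp $c/d$ with $d\mid 16$; hence $\tilde g$ can fail to be holomorphic only at $0$, and only if $\mathrm{ord}_0(g)<r$. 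Establishing $\mathrm{ord}_0(g)\ge r$ — equivalently, that exactly $r=8+m'$ factors of $\phi(-q)$ are needed to cancel the vanishing produced at the cusp $0$ — is the step I expect to be the main obstacle, and I anticipate it is exactly what forces the value $8+m'$ and the stated weight. I would prove it by tracking the order at $0$ through $U(m)$ via the Fricke involution $W_m$ (under which $U(m)$ at $\infty$ becomes a $V(m)$-type map at $0$), starting from $\mathrm{ord}_0\Phi=rm-1$. Once this inequality is in hand, $\tilde g$ is holomorphic at all cusps, so $\sum\bar p(mn)q^n$ is the reduction of $\tilde g\in M_{(8+m')\frac{m-1}{2}-\frac12}(\Gamma_0(16))$, which is the assertion.
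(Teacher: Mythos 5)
Your proposal has the same engine as the paper's proof --- multiply $1/\phi(-q)$ by an auxiliary eta-quotient supported on powers of $q^m$ so that the product is congruent mod $m$ to a holomorphic form of integral weight $\geq 2$, replace $U(m)$ by $T(m)$ modulo $m$ to control the level, pull the $q^m$-factor through $U(m)$, and divide it back out --- and your weight count $\frac{(8+m')(m-1)-1}{2}$ agrees with the paper's. The genuine difference is the choice of auxiliary form, and hence where the cusp analysis lands. The paper takes $\frac{\eta(2z)}{\eta^2(z)}\eta^{2m'-8}(mz)\eta^{16-m'}(2mz)\equiv_m\eta^{(2m'-8)m-2}(z)\eta^{(16-m')m+1}(2z)$, which is a genuine \emph{cusp} form on $\Gamma_0(2)$; since $\Gamma_0(2)$ has only two cusps and $\eta^8(z)\eta^8(2z)$ has order exactly $1$ at both, the $T(m)$-image is automatically $\eta^8(z)\eta^8(2z)$ times a holomorphic form $g(m;z)$, and unwinding gives $\phi^{8-m'}(-q)\,g(m;z)$ with no further work at the cusps. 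Your $\Phi=\phi(-q)^{rm-1}$ is arguably more natural, but it concentrates the entire holomorphy question into the single inequality $\mathrm{ord}_0\bigl(\Phi\,|\,T(m)\bigr)\geq r$ at the one cusp of $\Gamma_0(16)$ where $\phi(-q)$ vanishes, and this is exactly the step you leave as ``the main obstacle'' with only a sketch. That is the one real gap. The inequality is true and your route can be completed: writing $T(m)$ via upper-triangular coset representatives of determinant $m$ and noting that for $(m,16)=1$ the cusps $j/m$ are all $\Gamma_0(16)$-equivalent to $0$, one gets $\mathrm{ord}_0\bigl(\Phi\,|\,T(m)\bigr)\geq\mathrm{ord}_0(\Phi)/m=r-\tfrac1m$, after which you must argue that the possible orders at that cusp are quantized coarsely enough to force the bound up to $r$. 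That rounding step requires care (the expansion of a form with quadratic nebentypus at a cusp need not a priori lie in integer powers of the local uniformizer), and it is precisely what the paper's choice of exponents $a=2m'-8$, $b=16-m'$ is engineered to avoid. Two smaller items to tidy if you pursue your version: check that the character of $g/\phi(-q)^{r}$ reduces to the trivial one demanded by the target space, and record that $k=(rm-1)/2\geq2$ (needed for $m^{k-1}\equiv0\pmod m$) indeed holds for all $m\geq3$ with $r=8+m'$.
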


\begin{proof}
    We define an $\eta$-quotient by
    \begin{equation}
        \abovedisplayskip=1em
        \notag
        f(m;z)=\frac{\eta(2z)}{\eta^2(z)}\eta^a(mz)\eta^b(2mz)\equiv_m\eta^{am-2}(z)\eta^{bm+1}(2z),
        \belowdisplayskip=1em
    \end{equation}

    \noindent
    where $a=2m'-8$ and $b=16-m'$. One can use Theorem \ref{eta-quotient} and \ref{order of cusp} to show that
    \begin{equation}
        \abovedisplayskip=1em
        \notag
        \eta^{am-2}(z)\eta^{bm+1}(2z)\in S_{4m+\frac{mm'-1}{2}}(\Gamma_0(2)).
        \belowdisplayskip=1em
    \end{equation}

    \noindent
    In fact, it has order $m$ at the cusp $\infty$ and order $(mm'-1)/8$ at the cusp $0$. On the other hand,
    \begin{equation}
        \abovedisplayskip=1em
        \notag
        \frac{\eta(2z)}{\eta^2(z)}\eta^a(mz)\eta^b(2mz)=\sum_{n=0}^\infty\bar p(n)q^{n+m}\cdot\qPochhammer{q^m}{q^m}{a}\qPochhammer{q^{2m}}{q^{2m}}{b}.
        \belowdisplayskip=1em
    \end{equation}

    \noindent
    Since $\modulo{U(m)}{T(m)}{m}$, we have
    \begin{equation}
        \abovedisplayskip=1em
        \label{after U(m) || T(m)}
        \frac{\eta(2z)}{\eta^2(z)}\eta^a(mz)\eta^b(2mz)\ |\ U(m)\equiv_m\eta^{am-2}(z)\eta^{bm+1}(2z)\ |\ T(m).
        \belowdisplayskip=1em
    \end{equation}

    \noindent
    As for the left hand side of \eqref{after U(m) || T(m)}, we have
    \begin{equation}
        \abovedisplayskip=1em
        \notag
        \frac{\eta(2z)}{\eta^2(z)}\eta^a(mz)\eta^b(2mz)\ |\ U(m)=\sum_{\genfrac{}{}{0pt}{}{n=0}{\modulo{n}{0}{m}}}^\infty \bar p(n)q^{\frac{n+m}{m}}\cdot\qPochhammer{q}{q}{a}\qPochhammer{q^2}{q^2}{b}.
        \belowdisplayskip=1em
    \end{equation}

    \noindent
    One can use Theorem \ref{eta-quotient} and \ref{order of cusp} to show that $\eta^8(z)\eta^8(2z)\in S_8(\Gamma_0(2))$ has order $1$ at all cusps. So as for the right hand side of \eqref{after U(m) || T(m)}, we have
    \begin{equation}
        \abovedisplayskip=1em
        \notag
        \eta^{am-2}(z)\eta^{bm+1}(2z)\ |\ T(m)=\eta^8(z)\eta^8(2z)g(m;z),
        \belowdisplayskip=1em
    \end{equation}

    \noindent
    where $g(m;z)\in M_{4m-8+\frac{mm'-1}{2}}(\Gamma_0(2))$. Combining the above two equation to show that
    \begin{equation}
        \abovedisplayskip=1em
        \notag
        \sum_{\genfrac{}{}{0pt}{}{n=0}{\modulo{n}{0}{m}}}^\infty \bar p(n)q^{\frac{n}{m}}\equiv_m\eta^{8-a}(z)\eta^{8-b}(2z)g(m;z)=\frac{\eta^{2(8-m')}(z)}{\eta^{8-m'}(2z)}g(m;z),
        \belowdisplayskip=1em
    \end{equation}

    \noindent
    where
    \begin{equation}
        \abovedisplayskip=1em
        \notag
        \frac{\eta^{2(8-m')}(z)}{\eta^{8-m'}(2z)}=\phi^{8-m'}(-q)\in M_{\frac{8-m'}{2}}(\Gamma_0(16)).
        \belowdisplayskip=1em
    \end{equation}

    \noindent
    Hence
    \begin{equation}
        \abovedisplayskip=1em
        \notag
        \sum_{n=0}^\infty\bar p(mn)q^n\in M_{(8+m')\frac{m-1}{2}-\frac12}(\Gamma_0(16))_m.
        \belowdisplayskip=1em
    \end{equation}
\end{proof}

\noindent
We need more tools to proceed. Let
\begin{equation}
    \abovedisplayskip=1em
    \notag
    \phi_{s,t}(q)=\phi(q)^s(2q^{\frac14}\psi(q^2))^t.
    \belowdisplayskip=1em
\end{equation}

\noindent
If $4\mid t$, we may write
\begin{equation}
    \abovedisplayskip=1em
    \notag
    \phi_{s,t}(q)=\sum_{n=0}^\infty\phi_{s,t}(n)q^n.
    \belowdisplayskip=1em
\end{equation}

\noindent
Let $r_{s,t}(n)$ denote the number of representations of $n$ as a sum of $s$ even squares and $t$ odd squares, considering both negative numbers and order. Then
\begin{equation}
    \abovedisplayskip=1em
    \label{phi as sum of squares}
    \sum_{n=0}^\infty r_{s,t}(n)q^n=\binom{s+t}{s}\phi_{s,t}(q^4).
    \belowdisplayskip=1em
\end{equation}

\noindent
We will require the following fact about modular forms of half-integral weight.

\begin{theorem}[{\cite[page 184, Prop. 4.]{koblitz2012introduction}}]
    \label{basis of half-integral weight and level 4}
    $M_{\frac{2k+1}{2}}(\Gamma_0(4))$ is the vector space consisting of all linear combinations of $\phi_{2k+1-4j,4j}(q), j=0,1,\cdots,\floor{k/2}$.
\end{theorem}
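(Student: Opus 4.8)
The plan is to recognise the two factors as Jacobi theta constants and to prove the stronger ring-theoretic statement that the full graded algebra of modular forms on $\Gamma_0(4)$ (over all half-integral weights, with the standard theta multiplier) is freely generated by them. Write $\theta_3 = \phi(q) = \sum_{n} q^{n^2}$ and $\theta_2 = 2q^{1/4}\psi(q^2) = \sum_{n} q^{(n+1/2)^2}$, and set $F = \theta_2^4/16 = q\,\psi(q^2)^4$. Then $\phi_{2k+1-4j,4j}(q) = \theta_3^{\,2k+1-4j}\theta_2^{\,4j} = 16^{\,j}\,\theta_3^{\,2k+1-4j}F^{\,j}$, so the asserted spanning set is, up to nonzero scalars, exactly the set of weight-$\frac{2k+1}{2}$ monomials in $\theta_3$ (weight $\frac12$) and $F$ (weight $2$). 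It therefore suffices to prove $\bigoplus_{w}M_{w}(\Gamma_0(4)) = \mathbb{C}[\theta_3, F]$, where $w$ ranges over the half-integers.

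First I would record the two basic inputs: $\theta_3\in M_{\frac12}(\Gamma_0(4))$ (already used in the excerpt) and $F\in M_2(\Gamma_0(4))$, the latter because $\theta_2^4$ is an honest integer-$q$-exponent weight-$2$ form on $\Gamma_0(4)$. Linear independence of the monomials, hence algebraic independence of $\theta_3$ and $F$, is immediate from the $q$-expansions: $\theta_3 = 1+O(q)$ and $F = q+O(q^2)$ give $\theta_3^{\,2k+1-4j}F^{\,j} = q^{\,j} + O(q^{\,j+1})$, so the candidate forms have pairwise distinct orders of vanishing at $\infty$ and are linearly independent. This settles one inclusion and the independence half of the statement; what remains is that these monomials span.

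For spanning I would induct downward on the weight $w$, using that $M_w(\Gamma_0(4)) = 0$ for $w<0$ (valence formula), so that no separate base case is required. Given $f\in M_w(\Gamma_0(4))$, let $c$ be its value at $\infty$ and consider $f - c\,\theta_3^{\,2w}$; since $\theta_3^{\,2w}$ has value $1$ at $\infty$ while every monomial carrying a factor of $F$ vanishes there, this difference vanishes at $\infty$. The key geometric point is that $F$ vanishes at the single cusp $\infty$ of $\Gamma_0(4)$ (to order $1$) and is nonvanishing at the cusps $0$ and $1/2$ and throughout $\mathcal{H}$; granting this, $h := (f - c\,\theta_3^{\,2w})/F$ is holomorphic on $\mathcal{H}$ and at every cusp, i.e. $h\in M_{w-2}(\Gamma_0(4))$. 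By induction $h$ is a polynomial in $\theta_3$ and $F$, whence so is $f = c\,\theta_3^{\,2w} + F\,h$. Sorting the resulting monomials by weight then reproduces exactly the list $\phi_{2k+1-4j,4j}(q)$ with $0\le j\le\lfloor k/2\rfloor$.

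The main obstacle is the cusp analysis underlying the division step: one must verify that among the three theta constants $\theta_2,\theta_3,\theta_4$ the forms $\theta_2^4,\theta_4^4,\theta_3^4$ each vanish at exactly one cusp of $\Gamma_0(4)$ (namely $\infty$, $0$, $1/2$ respectively), to order $1$, and nowhere else. I would extract this from the classical transformation laws of the theta constants under $z\mapsto -1/z$ and $z\mapsto z+1$ (which permute $\theta_2,\theta_3,\theta_4$), combined with the nonvanishing of $\eta$ on $\mathcal{H}$ applied to the eta-quotient expressions of the $\theta_i$, with the Jacobi identity $\theta_3^4 = \theta_2^4 + \theta_4^4$ serving as a consistency check on the vanishing pattern. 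Everything else—membership, independence, and the inductive descent—is then routine.
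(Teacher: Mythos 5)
The paper does not prove this statement; it is quoted from Koblitz (p.~184, Prop.~4) and used as a black box. Your proposal is a correct reconstruction of the standard argument, and in fact it is essentially the proof given in the cited source: Koblitz's $F=\sum_{n\ \mathrm{odd}}\sigma(n)q^n$ is exactly your $q\psi(q^2)^4=\theta_2^4/16$, and his proof is the same downward induction, dividing by $F$ after killing the constant term with a power of $\theta_3$. All the load-bearing claims check out: $F=\eta(4z)^8/\eta(2z)^4\in M_2(\Gamma_0(4))$ has a simple zero at $\infty$ and order $0$ at the cusps $0$ and $1/2$ (this also follows directly from Theorems \ref{eta-quotient} and \ref{order of cusp} already in the paper, which is a cleaner route than the $\theta_2,\theta_3,\theta_4$ transformation laws you sketch), the monomials $\theta_3^{2k+1-4j}F^j$ have distinct orders of vanishing at $\infty$, and the index range $0\le j\le\floor{k/2}$ is exactly the condition $2k+1-4j\ge0$. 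The only step I would ask you to make explicit is the multiplier bookkeeping in the division: $h=(f-c\,\theta_3^{2w})/F$ has weight $w-2$ with the correct theta multiplier because $j(\gamma,z)^4=(cz+d)^2$ for $\gamma\in\Gamma_0(4)$, so dividing by a weight-$2$ form with trivial character shifts $j^{2\lambda+1}$ to $j^{2(\lambda-2)+1}$; likewise the vanishing of $M_w(\Gamma_0(4))$ for $w<0$ should be reduced to the integral-weight valence formula by passing to $h^4$. Neither point is a gap, just a sentence each.
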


\noindent
Now we are going to prove an identity of divisor functions.
\begin{theorem}
    \label{divisor functions identity}
    \begin{equation}
        \abovedisplayskip=1em
        \notag
        \sum_{i=1}^n \sigma(2i-1)\sigma(2n-2i+1)=\sum_{\genfrac{}{}{0pt}{}{d\mid n}{n/d\ \mathrm{odd}}}d^3.
        \belowdisplayskip=1em
    \end{equation}
\end{theorem}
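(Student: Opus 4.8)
The plan is to recognize both sides of the identity as the Fourier coefficients of one weight-$4$ modular form on $\Gamma_0(4)$, so that the whole statement collapses to a finite coefficient check via the valence (Sturm) bound.

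First I would rewrite the left-hand side as the coefficient of a theta power. Setting $P(q) = \sum_{m \geq 1,\, m \text{ odd}} \sigma(m) q^m$, the sum $\sum_{i=1}^n \sigma(2i-1)\sigma(2n-2i+1)$ is exactly the coefficient of $q^{2n}$ in $P(q)^2$. The key input here is the classical four–triangular–number evaluation $\psi(q)^4 = \sum_{n \geq 0}\sigma(2n+1)q^n$ (equivalently, the number of representations of $8k+4$ as a sum of four odd squares is $16\sigma(2k+1)$, which is the content of the $t=4$ case of \eqref{phi as sum of squares}). Replacing $q$ by $q^2$ and multiplying by $q$ gives $P(q) = q\,\psi(q^2)^4$, hence $P(q)^2 = q^2\psi(q^2)^8 = G(q^2)$, where $G(q) := q\,\psi(q)^8$. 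Consequently the left-hand side equals the coefficient of $q^n$ in $G(q)$.

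Next I would identify $G$ with an Eisenstein series. From the product expansion $\psi(q) = q^{-1/8}\eta(2z)^2/\eta(z)$ one gets $G(q) = \eta(2z)^{16}/\eta(z)^8$, and Theorems \ref{eta-quotient} and \ref{order of cusp} show $\eta(2z)^{16}/\eta(z)^8 \in M_4(\Gamma_0(4))$ with trivial character (it has weight $4$, a simple zero at $\infty$ and at $1/2$, and is holomorphic and nonvanishing at the cusp $0$). On the arithmetic side, writing $n = 2^a m$ with $m$ odd one verifies the elementary identity $\sum_{d \mid n,\, n/d \text{ odd}} d^3 = \sigma_3(n) - \sigma_3(n/2)$ (with $\sigma_3(n/2)=0$ for $n$ odd), so the right-hand side is the coefficient of $q^n$ in $\tfrac{1}{240}\bigl(E_4(z) - E_4(2z)\bigr)$, which also lies in $M_4(\Gamma_0(4))$.

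The identity then reduces to $G(q) = \tfrac{1}{240}\bigl(E_4(z) - E_4(2z)\bigr)$, an equality of two forms in $M_4(\Gamma_0(4))$. Since $[\mathrm{SL}_2(\mathbb{Z}):\Gamma_0(4)] = 6$, the valence bound $\lfloor 4\cdot 6/12\rfloor = 2$ means it suffices to match the coefficients of $q^0, q^1, q^2$; a direct computation gives $0,1,8$ on each side, so the two forms coincide and the theorem follows. I expect the only genuinely nontrivial step to be the first one — securing $P(q) = q\,\psi(q^2)^4$, i.e. the four–odd–squares evaluation — since once the left side is exhibited as the coefficients of an honest weight-$4$ form, finite-dimensionality does the rest. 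If one prefers not to quote that classical evaluation, it can itself be proved by the same method: $q\,\psi(q^2)^4 = \eta(4z)^8/\eta(2z)^4 \in M_2(\Gamma_0(8))$, and matching it against the weight-$2$ Eisenstein series whose coefficients are $\sigma$ over odd integers is again a finite check.
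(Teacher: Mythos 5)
Your argument is correct, and it overlaps with the paper's proof in its first half but diverges in the decisive step. The paper's proof is a two-line affair: it quotes from Ono--Robins--Wahl both the identity $q\psi^4(q^2)=\sum_{n\ge0}\sigma(2n+1)q^{2n+1}$ and the identity $q^2\psi^8(q^2)=\sum_{n\ge1}\bigl(\sum_{d\mid n,\ n/d\ \mathrm{odd}}d^3\bigr)q^{2n}$, and then simply compares coefficients of $q^{2n}$ in the square of the first. You use the same first identity (so your left-hand side also becomes the coefficient of $q^{2n}$ in $q^2\psi^8(q^2)$), but instead of citing the second identity you prove it: you recognize $q\psi(q)^8=\eta(2z)^{16}/\eta(z)^8$ as an element of $M_4(\Gamma_0(4))$, identify the right-hand side as the coefficients of $\tfrac{1}{240}\bigl(E_4(z)-E_4(2z)\bigr)$ via the elementary reduction $\sum_{d\mid n,\ n/d\ \mathrm{odd}}d^3=\sigma_3(n)-\sigma_3(n/2)$, and close the gap with the valence bound $4\cdot 6/12=2$, checking the coefficients $0,1,8$. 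All of these computations are right (the eta-quotient satisfies the Gordon--Hughes conditions with trivial character and is holomorphic at all three cusps of $\Gamma_0(4)$). What your route buys is self-containedness: it uses only tools already deployed elsewhere in the paper (Theorems \ref{eta-quotient}, \ref{order of cusp}, and a Sturm-type bound) rather than an external table of theta-power evaluations; the cost is a page of cusp and coefficient computations where the paper spends one citation. One small caveat: you still quote the four--triangular--number evaluation $\psi(q)^4=\sum\sigma(2n+1)q^n$ as classical input, exactly as the paper does, so the two proofs are not independent at that point --- though, as you note, that identity too succumbs to the same finite-check method in $M_2(\Gamma_0(8))$.
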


\begin{proof}
    By \cite[page 6]{ono1995representation} we have
    \begin{equation}
        \abovedisplayskip=1em
        \notag
        q\psi^4(q^2)=\sum_{n=0}^\infty\sigma(2n+1)q^{2n+1}
        \belowdisplayskip=1em
    \end{equation}

    \noindent
    and \cite[page 8]{ono1995representation}
    \begin{equation}
        \abovedisplayskip=1em
        \notag
        q^2\psi^8(q^2)=\sum_{n=1}^\infty\sum_{\genfrac{}{}{0pt}{}{d\mid n}{n/d\ \mathrm{odd}}}d^3q^{2n}.
        \belowdisplayskip=1em
    \end{equation}

    \noindent
    Comparing the coefficients of both sides gives the desired result.
\end{proof}

\begin{proof}[Proof of Theorem \ref{Mod 7 congruences}]
    By Theorem \ref{sum bar p(mn) q^n is a modular form} we have
    \begin{equation}
        \abovedisplayskip=1em
        \notag
        \sum_{n=0}^\infty \bar p(7n)q^n\in M_{\frac{89}{2}}(\Gamma_0(16))_7.
        \belowdisplayskip=1em
    \end{equation}

    \noindent
    Thus
    \begin{equation}
        \abovedisplayskip=1em
        \notag
        \sum_{n=0}^\infty (-1)^n\bar p(7n)q^n=2\sum_{n=0}^\infty \bar p(7n)q^n\ |\ U(2)\ |\ V(2)-\sum_{n=0}^\infty \bar p(7n)q^n\in M_{\frac{89}{2}}(\Gamma_0(32))_7.
        \belowdisplayskip=1em
    \end{equation}

    \noindent
    On the other hand, since Eisenstein series
    \begin{equation}
        \abovedisplayskip=1em
        \notag
        E_6(z)=1-504\sum_{n=1}^\infty \sigma_5(n)q^n\in M_6(\Gamma_0(1))
        \belowdisplayskip=1em
    \end{equation}

    \noindent
    and $\modulo{E_6(z)}{1}{7}$, we have
    \begin{equation}
        \abovedisplayskip=1em
        \notag
        E_6^7(z)(\phi^5(q)-2\phi_{1,4}(q))\in M_{\frac{89}{2}}(\Gamma_0(32)).
        \belowdisplayskip=1em
    \end{equation}

    \noindent
    One can use Theorem \ref{Sturm's theorem} to show that
    \begin{equation}
        \abovedisplayskip=1em
        \notag
        \modulo{\sum_{n=0}^\infty (-1)^n\bar p(7n)q^n}{\phi^5(q)-2\phi_{1,4}(q)}7,
        \belowdisplayskip=1em
    \end{equation}

    \noindent
    well beyond the Sturm bound of $178$. So we can view 
    \begin{equation}
        \abovedisplayskip=1em
        \notag
        \sum_{n=0}^\infty (-1)^n\bar p(7n)q^n\in M_{\frac52}(\Gamma_0(4))_7.
        \belowdisplayskip=1em
    \end{equation}

    \noindent
    Now we are going to analyse $(\phi^5(q)-2\phi_{1,4}(q))\ |\ T(Q^2)$ for odd prime $Q$. Since $M_{\frac52}(\Gamma_0(4))$ is of dimension $2$, we can write
    \begin{equation}
        \abovedisplayskip=1em
        \label{equation for T(Q^2)}
        (\phi^5(q)-2\phi_{1,4}(q))\ |\ T(Q^2) = c_0\phi^5(q) + c_1\phi_{1,4}(q)
        \belowdisplayskip=1em
    \end{equation}

    \noindent
    for some constants $c_0,c_1$. Since The above equation is equivalent to
    \begin{equation}
        \abovedisplayskip=1em
        \label{phi_1,4 | T(Q^2)}
        2\phi_{1,4}(q)\ |\ T(Q^2) = (Q^3+1-c_0)\phi^5(q) - c_1\phi_{1,4}(q)
        \belowdisplayskip=1em
    \end{equation}

    \noindent
    Comparing the constant term of both sides of \eqref{phi_1,4 | T(Q^2)}, obtaining $c_0=Q^3+1$. To obtain $c_1$, we may rewrite \eqref{phi_1,4 | T(Q^2)} via the definition of Hecke operator to obtain
    \begin{equation}
        \abovedisplayskip=1em
        \notag
        2\phi_{1,4}(Q^2n)+2Q\bracket{\frac{n}{Q}}\phi_{1,4}(n)+2Q^3\phi_{1,4}\bracket{\frac{n}{Q^2}}=-c_1\phi_{1,4}(n).
        \belowdisplayskip=1em
    \end{equation}

    \noindent
    Thus
    \begin{equation}
        \abovedisplayskip=1em
        \notag
        2\phi_{1,4}(Q^2)+2Q\phi_{1,4}(1)=-c_1\phi_{1,4}(1).
        \belowdisplayskip=1em
    \end{equation}

    \noindent
    Since $\phi_{1,4}(1)=16$, we have $\phi_{1,4}(Q^2)+16Q=-8c_1$. So next we need to compute $\phi_{1,4}(Q^2)$. By \eqref{phi as sum of squares} we have
    \begin{equation}
        \abovedisplayskip=1em
        \notag
        \sum_{n=0}^\infty r_{1,4}(n)q^n=5\sum_{n=0}^\infty \phi_{1,4}(n)q^{4n}.
        \belowdisplayskip=1em
    \end{equation}

    \noindent
    Thus $5\phi_{1,4}(Q^2)=r_{1,4}(4Q^2)$. Since $\modulo{4Q^2}{4}{8}$, we notice that if
    \begin{equation}
        \abovedisplayskip=1em
        \notag
        4Q^2=x_1^2+x_2^2+x_3^2+x_4^2+x_5^2,
        \belowdisplayskip=1em
    \end{equation}

    \noindent
    then some one of $x_i$ must be a multiple of $4$ and others are odd. So
    \begin{equation}
        \abovedisplayskip=1em
        \notag
        \begin{aligned}
            r_{1,4}(4Q^2)&=5r_{0,4}(4Q^2)+10\sum_{\genfrac{}{}{0pt}{}{i>1}{4\mid i}}r_{0,4}(4Q^2-i^2)\\
            &=5r_{0,4}(4Q^2)+10\sum_{i=1}^{\floor{Q/2}}r_{0,4}(4Q^2-16i^2).
        \end{aligned}
        \belowdisplayskip=1em
    \end{equation}

    \noindent
    Note that the number of representations of $8n+4$ as a sum of $4$ positive odd squares equals the number of representations of $n$ as a sum of $4$ triangle numbers. Moreover, the number of representations of $n$ as a sum of $4$ triangle numbers is equal to $\sigma(2n+1)$ \cite[(1.15)]{hirschhorn2005number}. Thus $r_{0,4}(8n+4)=16\sigma(2n+1)$, the coefficient $16$ occurs since we count square of negative odd numbers. Consequently,
    \begin{equation}
        \abovedisplayskip=1em
        \notag
        \begin{aligned}
            \phi_{1,4}(Q^2)&=16\sigma(Q^2)+32\sum_{i=1}^{(Q-1)/2}\sigma(Q^2-4i^2)\\
            &= 16\sigma(Q^2)+32\sum_{i=1}^{(Q-1)/2}\sigma((2i-1)(2Q-2i+1))\\
            &=16\sigma(Q^2)+16\sum_{i=1}^{(Q-1)/2}\sigma((2i-1)(2Q-2i+1))\\
            &\ \ \ \ \ \ \ \ \ \ \ \ \ \ \ +16\sum_{i=(Q+3)/2}^{Q}\sigma((2i-1)(2Q-2i+1))\\
            &=16\sum_{i=1}^{Q}\sigma((2i-1)(2Q-2i+1)).
        \end{aligned}
        \belowdisplayskip=1em
    \end{equation}

    \noindent
    Since $(2i-1,2Q-2i+1)=(2i-1,2Q)=(2i-1,Q)=1$ when $i\neq(Q+1)/2$, we have
    \begin{equation}
        \abovedisplayskip=1em
        \notag
        \phi_{1,4}(Q^2)=16\sigma(Q^2)-16\sigma^2(Q)+16\sum_{i=1}^{Q}\sigma(2i-1)\sigma(2Q-2i+1).
        \belowdisplayskip=1em
    \end{equation}

    \noindent
    By Theorem \ref{divisor functions identity} we have
    \begin{equation}
        \abovedisplayskip=1em
        \notag
        \begin{aligned}
            \phi_{1,4}(Q^2)&=16(1+Q+Q^2)-16(1+Q)^2+16(1+Q^3)\\
            &=16(Q^3-Q+1).
        \end{aligned}
        \belowdisplayskip=1em
    \end{equation}

    \noindent
    Recalling that we have computed $\phi_{1,4}(Q^2)+16Q=-8c_1$, so $c_1=-2(Q^3+1)$. Now \eqref{equation for T(Q^2)} becomes
    \begin{equation}
        \abovedisplayskip=1em
        \label{phi_1,4 is eigenform}
        (\phi^5(q)-2\phi_{1,4}(q))\ |\ T(Q^2) = (Q^3+1)(\phi^5(q)-2\phi_{1,4}(q)).
        \belowdisplayskip=1em
    \end{equation}

    \noindent
    Note that $\modulo{Q^3+1}{0}{7}$ for $\modulo{Q}{3,5,6}{7}$, so for these primes $Q$ we have
    \begin{equation}
        \abovedisplayskip=1em
        \notag
        \modulo{\sum_{n=0}^\infty (-1)^n\bar p(7n)q^n\ |\ T(Q^2)}{0}{7},
        \belowdisplayskip=1em
    \end{equation}

    \noindent
    i.e.
    \begin{equation}
        \abovedisplayskip=1em
        \notag
        \modulo{(-1)^{Q^2n}\bar p(7Q^2n)+(-1)^nQ\bracket{\frac{n}{Q}}\bar p(7n)+(-1)^{n/Q^2}Q^{3}\bar p\bracket{\frac{7n}{Q^2}}}07
        \belowdisplayskip=1em
    \end{equation}

    \noindent
    satisfies for all $n$. Let $n=Qn$ with $(n,Q)=1$ to eliminate the latter two parts, obtaining
    \begin{equation}
        \abovedisplayskip=1em
        \notag
        \modulo{\bar p(7Q^3n)}{0}{7}
        \belowdisplayskip=1em
    \end{equation}

    \noindent
    for all $(n,Q)=1$.
\end{proof}

\begin{corollary}
    For each odd prime $Q$, $\phi_{1,4}(q)$ is an eigenform for Hecke operators $T(Q^2)$, and the eigenvalue is $Q^3+1$.
\end{corollary}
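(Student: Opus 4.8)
The plan is to extract this statement as a formal consequence of two eigenform relations already established in the paper, together with the linearity of the half-integral weight Hecke operator $T(Q^2)$. The two ingredients are: first, the fact that $\phi^5(q)=\sum_n r_5(n)q^n$ is itself a $T(Q^2)$-eigenform with eigenvalue $Q^3+1$, which is precisely the content of \eqref{r_5 is eigenform}; and second, the identity \eqref{phi_1,4 is eigenform}, namely $(\phi^5(q)-2\phi_{1,4}(q))\mid T(Q^2)=(Q^3+1)(\phi^5(q)-2\phi_{1,4}(q))$, which was derived in the proof of Theorem \ref{Mod 7 congruences} for every odd prime $Q$.

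First I would observe that \eqref{r_5 is eigenform}, read as a statement about the $q$-expansion coefficients $a(n)=r_5(n)$ of $\phi^5(q)\in M_{\frac52}(\Gamma_0(4))$, is exactly the assertion
\[
\phi^5(q)\mid T(Q^2)=(Q^3+1)\phi^5(q),
\]
since with trivial character and $\lambda=2$ the defining formula for $T(Q^2)$ sends $a(n)$ to $a(Q^2n)+\bracket{\frac nQ}Q\,a(n)+Q^3a(n/Q^2)$. Hence both $\phi^5(q)$ and $\phi^5(q)-2\phi_{1,4}(q)$ are $T(Q^2)$-eigenforms sharing the eigenvalue $Q^3+1$.

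Then, by linearity of $T(Q^2)$ and the eigenvalue just recorded for $\phi^5(q)$, I would write
\[
(\phi^5(q)-2\phi_{1,4}(q))\mid T(Q^2)=(Q^3+1)\phi^5(q)-2\,\phi_{1,4}(q)\mid T(Q^2),
\]
and equate the right-hand side with $(Q^3+1)(\phi^5(q)-2\phi_{1,4}(q))$ coming from \eqref{phi_1,4 is eigenform}. The $(Q^3+1)\phi^5(q)$ terms cancel, leaving $-2\,\phi_{1,4}(q)\mid T(Q^2)=-2(Q^3+1)\phi_{1,4}(q)$, whence $\phi_{1,4}(q)\mid T(Q^2)=(Q^3+1)\phi_{1,4}(q)$, as claimed.

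There is no genuine obstacle here: all the arithmetic was already carried out in computing $c_0=Q^3+1$ and $c_1=-2(Q^3+1)$ during the proof of Theorem \ref{Mod 7 congruences}, so this corollary is a bookkeeping consequence of decoupling the two eigenforms. The one point worth emphasizing is that \eqref{phi_1,4 is eigenform} is a genuine equality of modular forms in $M_{\frac52}(\Gamma_0(4))$, not merely a congruence modulo $7$ — the constants $c_0,c_1$ were determined exactly — so the eigenform property of $\phi_{1,4}(q)$ holds on the nose and for every odd prime $Q$, with no congruence restriction.
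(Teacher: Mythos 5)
Your proposal is correct and matches the paper's argument: the paper's proof is simply ``it immediately follows from \eqref{phi_1,4 is eigenform},'' and the implicit content of that remark is exactly the decoupling you spell out, namely combining \eqref{phi_1,4 is eigenform} with the eigenform relation $\phi^5(q)\mid T(Q^2)=(Q^3+1)\phi^5(q)$ from \eqref{r_5 is eigenform} and cancelling by linearity. Your added remark that the identity holds exactly (not merely modulo $7$) because $c_0$ and $c_1$ were computed exactly is also consistent with the paper.
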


\begin{proof}
    It immediately follows from \eqref{phi_1,4 is eigenform}.
\end{proof}

\section{Modulo 11}

By Theorem \ref{basis of half-integral weight and level 4} we know that a set of basis of $M_{\frac92}(\Gamma_0(4))$ is $\phi^9(q)$, $\phi_{5,4}(q)$, and $\phi_{1,8}(q)$. The following theorem describes how Hecke operators act on these basis \cite[Theorem 6.5.]{cooper2002sums}.

\begin{theorem}[Cooper]
    \label{Hecke operator act on M_9/2}
    Let $Q$ be an odd prime, $\alpha=Q^7+1$, $\beta=\theta(p)$, then
    \begin{equation}
        \abovedisplayskip=1em
        \notag
        \begin{pmatrix}
            \phi^9(q)\ |\ T(Q^2) \\
            \phi_{5,4}(q)\ |\ T(Q^2) \\
            \phi_{1,8}(q)\ |\ T(Q^2)
        \end{pmatrix}
        =\frac{1}{17}
        \begin{pmatrix}
            17\alpha & -2\alpha+2\beta & 2\alpha-2\beta \\
            0 & 16\alpha+\beta & \alpha-\beta \\
            0 & 16\alpha-16\beta & \alpha+16\beta \\
        \end{pmatrix}
        \begin{pmatrix}
            \phi^9(q) \\
            \phi_{5,4}(q) \\
            \phi_{1,8}(q)
        \end{pmatrix},
        \belowdisplayskip=1em
    \end{equation}

    \noindent
    one can see definition of $\theta(p)$ in \cite[(2.1)]{cooper2002sums}. We will not use the value of $\theta(p)$ in this paper.
\end{theorem}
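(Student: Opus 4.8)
The plan is to treat $T(Q^2)$ as a linear endomorphism of the three-dimensional space $M_{\frac92}(\Gamma_0(4))$, whose basis $\{\phi^9(q),\phi_{5,4}(q),\phi_{1,8}(q)\}$ is supplied by Theorem \ref{basis of half-integral weight and level 4}, and to read off its matrix from the decomposition of the space into Hecke eigenspaces. Here the weight is $\lambda+\tfrac12$ with $\lambda=4$, so by the definition of $T(Q^2)$ in Section 2 the $n$-th coefficient of $f\ |\ T(Q^2)$ is $a(Q^2n)+\bracket{\frac{n}{Q}}Q^3a(n)+Q^7a(n/Q^2)$, and the Eisenstein eigenvalue is the expected $Q^{2\lambda-1}+1=Q^7+1=\alpha$. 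The first conceptual step is therefore the splitting
\[
M_{\tfrac92}(\Gamma_0(4))=\mathcal{E}\oplus\mathcal{S},\qquad \dim\mathcal{E}=2,\ \dim\mathcal{S}=1,
\]
on which $T(Q^2)$ acts by the scalar $\alpha$ on the Eisenstein part $\mathcal{E}$ and by a scalar I denote $\beta=\theta(Q)$ on the one-dimensional cusp space $\mathcal{S}$.

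Next I would locate the eigenforms explicitly inside the given basis. The constant-term formula shows that the subspace of forms vanishing at $\infty$, namely $\langle\phi_{5,4},\phi_{1,8}\rangle$, is $T(Q^2)$-stable, and a short computation of the orders at the cusps $0$ and $\tfrac12$ singles out the combination generating $\mathcal{S}$ as $\phi_{5,4}(q)-\phi_{1,8}(q)$; the two remaining independent eigenforms $\phi^9(q)-2\phi_{5,4}(q)$ and $16\phi_{5,4}(q)+\phi_{1,8}(q)$ then span $\mathcal{E}$ and carry the eigenvalue $\alpha$. Writing each of $\phi^9,\phi_{5,4},\phi_{1,8}$ in this eigenbasis, applying $T(Q^2)$ (which merely rescales each eigencomponent by $\alpha$ or $\beta$), and transforming back to $\{\phi^9,\phi_{5,4},\phi_{1,8}\}$ assembles all nine entries of the stated matrix. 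The denominator $17=16+1$ that pervades the matrix is precisely the determinant of the change of basis between the two coordinate systems, which accounts transparently for its appearance.

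The substantive input is the value of $\beta=\theta(Q)$, the Hecke eigenvalue on $\mathcal{S}$. Two routes are available. The first is to compute the relevant coefficients of $\phi_{s,t}(q)\ |\ T(Q^2)$ directly: by \eqref{phi as sum of squares} one rewrites $\phi_{s,t}(Q^2)$ as a representation number and reduces it to divisor sums through identities of the type in Theorem \ref{divisor functions identity}, exactly as $\phi_{1,4}(Q^2)$ was evaluated in the proof of Theorem \ref{Mod 7 congruences}. The second and cleaner route is the Shimura lift: $\mathcal{S}$ corresponds to the one-dimensional space $S_8(\Gamma_0(2))=\langle\eta^8(z)\eta^8(2z)\rangle$, so that $\theta(Q)$ is the $Q$-th Fourier coefficient of this normalized newform and the $T(Q^2)$-eigenvalue relation descends from the $T(Q)$-relation on the lift.

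I expect the main obstacle to be exactly this determination of the cuspidal eigenvalue. Unlike the case of $\phi_{1,4}$, the representation-number sums for $\phi_{5,4}$ and $\phi_{1,8}$ do not collapse to a polynomial in $Q$; the residual term is a genuine cusp-form coefficient, and isolating $\theta(Q)$ from the Eisenstein main term $Q^7+1$ is precisely where the arithmetic of the weight-$\tfrac92$ cusp space enters and resists an elementary closed form. By contrast, the stability of the space under $T(Q^2)$, the identification of the cusp form by its cusp orders, and the final linear algebra that produces the matrix are all routine once $\theta(Q)$ is in hand.
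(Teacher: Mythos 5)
This statement is not proved in the paper at all: it is imported verbatim from Cooper \cite[Theorem 6.5]{cooper2002sums}, so there is no in-paper argument to compare against. Taken on its own terms, your reconstruction has the right skeleton, and its linear algebra is verifiably consistent with the stated matrix: $\phi_{5,4}(q)-\phi_{1,8}(q)$ is indeed the $\beta$-eigenvector of the displayed matrix, $\phi^9(q)-2\phi_{5,4}(q)$ and $16\phi_{5,4}(q)+\phi_{1,8}(q)$ are $\alpha$-eigenvectors (the first of these is exactly the combination the paper exploits in the proof of Theorem \ref{Mod 11 congruences}), the zeros in the first column follow from your correct observation that the constant-term-zero subspace $\langle\phi_{5,4},\phi_{1,8}\rangle$ is $T(Q^2)$-stable, and the $17$ is (up to sign) the determinant of the change of basis, as you say.

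The genuine gap is in the step that actually produces the matrix entries, namely the identification of the $\alpha$-eigenvectors. Computing orders of vanishing at the cusps $0$ and $\tfrac12$ can isolate the one-dimensional cusp space $\mathcal S$, but it cannot locate the Eisenstein subspace $\mathcal E$: any $2$-dimensional complement of $\mathcal S$ contains vectors not vanishing at some cusp, so ``not cuspidal'' does not pin down $\mathcal E$. To justify that the cuspidal projections of $\phi^9-2\phi_{5,4}$ and $16\phi_{5,4}+\phi_{1,8}$ vanish you must either match constant terms at both regular cusps against the two explicit (Cohen-type) Eisenstein series and then kill the residual multiple of $\phi_{5,4}-\phi_{1,8}$ by comparing one more Fourier coefficient, or verify the claimed eigenform identities coefficient-by-coefficient up to a Sturm bound; the coefficients $-2$, $16$, $1$ are precisely the output of that computation and do not come for free. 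You also assert without proof that both Eisenstein series share the eigenvalue $Q^{2\lambda-1}+1=Q^7+1$, which is true but needs a citation or an argument. Finally, you locate the difficulty in the wrong place: the theorem requires no closed form for $\beta$ --- $\theta(p)$ is simply Cooper's name for the cuspidal Hecke eigenvalue (equivalently the $p$-th coefficient of the Shimura lift in $S_8(\Gamma_0(2))=\mathbb C\,\eta^8(z)\eta^8(2z)$), and the paper explicitly never uses its value --- so the ``substantive input'' is not $\theta(Q)$ but the eigenvector identification you treat as routine.
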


\begin{proof}[Proof of Theorem \ref{Mod 11 congruences}]
    By Theorem \ref{sum bar p(mn) q^n is a modular form} we have
    \begin{equation}
        \abovedisplayskip=1em
        \notag
        \bar p(11Q^3n)\in M_{\frac{109}{2}}(\Gamma_0(16))_{11}.
        \belowdisplayskip=1em
    \end{equation}

    \noindent
    Thus
    \begin{equation}
        \abovedisplayskip=1em
        \notag
        \sum_{n=0}^\infty (-1)^n\bar p(11n)q^n=2\sum_{n=0}^\infty \bar p(11n)q^n\ |\ U(2)\ |\ V(2)-\sum_{n=0}^\infty \bar p(11n)q^n\in M_{\frac{109}{2}}(\Gamma_0(32))_{11}.
        \belowdisplayskip=1em
    \end{equation}

    \noindent
    On the other hand, since Eisenstein series
    \begin{equation}
        \abovedisplayskip=1em
        \notag
        E_{10}(z)=1-264\sum_{n=1}^\infty \sigma_9(n)q^n\in M_{10}(\Gamma_0(1))
        \belowdisplayskip=1em
    \end{equation}

    \noindent
    and $\modulo{E_{10}(z)}{1}{11}$, we have
    \begin{equation}
        \abovedisplayskip=1em
        \notag
        E_{10}^5(z)(\phi^9(q)-2\phi_{5,4}(q))\in M_{\frac{109}{2}}(\Gamma_0(32)).
        \belowdisplayskip=1em
    \end{equation}

    \noindent
    One can use Theorem \ref{Sturm's theorem} to show that
    \begin{equation}
        \abovedisplayskip=1em
        \notag
        \modulo{\sum_{n=0}^\infty (-1)^n\bar p(11n)q^n}{\phi^9(q)-2\phi_{5,4}(q)}{11},
        \belowdisplayskip=1em
    \end{equation}

    \noindent
    well beyond the Sturm bound of $218$. So we can view 
    \begin{equation}
        \abovedisplayskip=1em
        \notag
        \sum_{n=0}^\infty (-1)^n\bar p(11n)q^n\in M_{\frac92}(\Gamma_0(4))_{11}.
        \belowdisplayskip=1em
    \end{equation}

    \noindent
    For odd prime $Q$, by Theorem \ref{Hecke operator act on M_9/2} we obtain
    \begin{equation}
        \abovedisplayskip=1em
        \notag
        \begin{aligned}
            (\phi^9(q)-2\phi_{5,4}(q))\ |\ T(Q^2)&=\alpha\phi^9(q)-2\alpha\phi_{5,4}(q)\\
            &=(Q^7+1)(\phi^9(q)-2\phi_{5,4}(q)).
        \end{aligned}
        \belowdisplayskip=1em
    \end{equation}

    \noindent
    Since $\modulo{Q^7+1}{0}{11}$ for $\modulo{Q}{10}{11}$, so for these primes $Q$ we have
    \begin{equation}
        \abovedisplayskip=1em
        \notag
        \modulo{(\phi^9(q)-2\phi_{5,4}(q))\ |\ T(Q^2)}{0}{11},
        \belowdisplayskip=1em
    \end{equation}

    \noindent
    so for these primes $Q$,
    \begin{equation}
        \abovedisplayskip=1em
        \notag
        \modulo{\sum_{n=0}^\infty (-1)^n\bar p(11n)q^n\ |\ T(Q^2)}{0}{11},
        \belowdisplayskip=1em
    \end{equation}

    \noindent
    i.e.
    \begin{equation}
        \abovedisplayskip=1em
        \notag
        \modulo{(-1)^{Q^2n}\bar p(11Q^2n)+(-1)^nQ^3\bracket{\frac{n}{Q}}\bar p(11n)+(-1)^{n/Q^2}Q^{7}\bar p\bracket{\frac{11n}{Q^2}}}0{11}
        \belowdisplayskip=1em
    \end{equation}

    \noindent
    satisfies for all $n$. Let $n=Qn$ with $(n,Q)=1$ to eliminate the latter two parts, obtaining
    \begin{equation}
        \abovedisplayskip=1em
        \notag
        \modulo{\bar p(11Q^3n)}{0}{11}
        \belowdisplayskip=1em
    \end{equation}

    \noindent
    for all $(n,Q)=1$.
\end{proof}

\section{Modulo other primes}

For odd primes $m\leq11$, we have already prove that for each odd prime $\modulo{Q}{-1}{m}$,
\begin{equation}
    \abovedisplayskip=1em
    \label{pattern for odd primes leq 11}
    \modulo{\bar p(mQ^3n)}{0}{m}
    \belowdisplayskip=1em
\end{equation}

\noindent
for all $(n,Q)=1$. However, this fail for $m=13$, since $\modulo{\bar p(13\cdot103^3\cdot3)}{12}{13}$. Moreover, \eqref{pattern for odd primes leq 11} seems to fail for primes $\geq13$.

However, for primes $\geq13$, we still can prove that there are ininitely many primes $Q$ such that
\begin{equation}
    \abovedisplayskip=2pt
    \notag
    \modulo{\bar p(mQ^3n)}{0}{m}
    \belowdisplayskip=2pt
\end{equation}

\noindent
for all $n$ coprime to $mQ$. First we need a theorem due to Serre.

\begin{theorem}[Serre]
\label{Serre's theorem}
The set of primes $Q\equiv-1\ (\mathrm{mod}\ Nm)$ such that
$$f\ |\ T(Q)\equiv0\ (\mathrm{mod}\ m)$$

\noindent
for each $f(z)\in M_k(\Gamma_0(N),\psi)_m$ has positive density, where $T(Q)$ denotes the usual Hecke operator acting on $M_k(\Gamma_0(N),\psi)$.
\end{theorem}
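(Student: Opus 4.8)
The plan is to realize each reduced Hecke operator $T(Q)$ as a ``trace of Frobenius'' valued in a finite ring, so that whether $T(Q)$ annihilates the whole space depends only on the conjugacy class of a Frobenius element in a fixed finite Galois group. The theorem then reduces to a Chebotarev density statement about one distinguished class, namely that of complex conjugation.

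First I would set $V=M_{\frac k2}(\Gamma_0(N),\psi)_m$, a finite-dimensional $\mathbb{F}_m$-vector space, and let $A\subseteq\mathrm{End}_{\mathbb{F}_m}(V)$ be the finite commutative algebra generated by all $T(Q)$ with $Q\nmid Nm$; the goal is a positive-density set of primes $Q\equiv -1\pmod{Nm}$ with $T(Q)=0$ in $A$. To each system of Hecke eigenvalues $\lambda\colon A\to\overline{\mathbb{F}}_m$ occurring in $V$ I would attach a continuous semisimple representation $\bar\rho_\lambda\colon\mathrm{Gal}(\overline{\mathbb{Q}}/\mathbb{Q})\to GL_2(\overline{\mathbb{F}}_m)$, unramified outside $Nm$, with $\mathrm{Tr}\,\bar\rho_\lambda(\mathrm{Frob}_Q)=\lambda(T(Q))$ and determinant character $\psi$ times the $(k-1)$-st power of the mod-$m$ cyclotomic character. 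This is Deligne's construction (with Deligne--Serre in weight one) on the cuspidal part and an explicit sum of two Dirichlet characters on the Eisenstein part. Taking $K$ to be the compositum of $\mathbb{Q}(\zeta_{Nm})$ with the fixed fields of the kernels of all the $\bar\rho_\lambda$ yields a finite Galois extension $K/\mathbb{Q}$ with group $G$ through which every $\lambda(T(\cdot))$ and all the cyclotomic/character data factor, so each $\lambda(T(Q))$ is a class function of $\mathrm{Frob}_Q\in G$.

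The distinguished class is that of a complex conjugation $c\in G$. Its restriction to $\mathbb{Q}(\zeta_{Nm})$ is inversion, so $\mathrm{Frob}_Q\sim c$ forces $Q\equiv -1\pmod{Nm}$. Moreover the determinant formula together with the parity condition $\psi(-1)=(-1)^k$, which is necessary for $V\neq 0$, gives $\det\bar\rho_\lambda(c)=\psi(-1)(-1)^{k-1}=-1$ for every $\lambda$; since $c^2=1$, the matrix $\bar\rho_\lambda(c)$ squares to the identity, and as $m$ is odd an involution of determinant $-1$ has eigenvalues $1,-1$ and hence trace $0$. Thus $\lambda(T(Q))=0$ for every $\lambda$ whenever $\mathrm{Frob}_Q\sim c$. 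As the class of $c$ is nonempty (indeed $c\neq 1$ since $\mathbb{Q}(\zeta_{Nm})$ is not totally real for $m\geq 3$), the Chebotarev density theorem supplies a set of primes $Q$ with $\mathrm{Frob}_Q\sim c$ of density $|\mathrm{class}(c)|/|G|>0$, each satisfying $Q\equiv -1\pmod{Nm}$ and having all Hecke eigenvalues of $T(Q)$ equal to zero.

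The main obstacle is the upgrade from ``every eigenvalue of $T(Q)$ vanishes'' to ``$T(Q)=0$ as an operator on $V$'': because the mod-$m$ Hecke algebra $A$ need not be semisimple, a priori $T(Q)$ is only nilpotent on each generalized eigenspace. I would resolve this one local factor $A_{\mathfrak m}$ of $A$ at a time, realizing the localized Hecke module as a free rank-two $A_{\mathfrak m}$-module carrying a genuine representation $\rho_{\mathfrak m}\colon\mathrm{Gal}(\overline{\mathbb{Q}}/\mathbb{Q})\to GL_2(A_{\mathfrak m})$ lifting $\bar\rho_\lambda$ — via the Gorenstein property of the Hecke algebra when $\bar\rho_\lambda$ is irreducible, and via the explicit Eisenstein description when it is reducible. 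For such an honest $\rho_{\mathfrak m}$ the element $\rho_{\mathfrak m}(c)$ is an involution of determinant $-1$, hence, as $2\in A_{\mathfrak m}^{\times}$, diagonalizable with eigenvalues $1,-1$, so $T(Q)=\mathrm{Tr}\,\rho_{\mathfrak m}(\mathrm{Frob}_Q)=0$ exactly in $A_{\mathfrak m}$ for $\mathrm{Frob}_Q\sim c$. Assembling the local factors gives $T(Q)=0$ in $A$, i.e. $f\mid T(Q)\equiv 0\pmod m$ for all $f$. The residually reducible (Eisenstein-congruence) case is the delicate point, since there the existence and freeness of the rank-two lift demand the most care.
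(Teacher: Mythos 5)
The paper does not actually prove this statement: it is quoted as a known theorem of Serre (cf.\ \cite{serre1974divisibilite} and \cite[Thm.~2.65]{ono2004web}), so your attempt can only be measured against the classical argument. Your first two paragraphs reproduce that argument correctly: there are finitely many mod-$m$ eigensystems, each carries a semisimple representation $\overline\rho_\lambda$ unramified outside $Nm$ with $\mathrm{Tr}\,\overline\rho_\lambda(\mathrm{Frob}_Q)=\lambda(T(Q))$ and $\det=\psi\chi_{\mathrm{cyc}}^{k-1}$; in the compositum with $\mathbb{Q}(\zeta_{Nm})$, the condition $\mathrm{Frob}_Q\sim c$ forces $Q\equiv-1\pmod{Nm}$, the parity $\psi(-1)=(-1)^k$ forces $\det\overline\rho_\lambda(c)=-1$, an involution of determinant $-1$ in odd characteristic has trace $0$, and Chebotarev gives positive density. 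You also correctly isolate the true difficulty: this only shows each $T(Q)$ is \emph{nilpotent} on $M_k(\Gamma_0(N),\psi)_m$, since the mod-$m$ Hecke algebra need not be semisimple.

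The gap is your third paragraph. The Gorenstein/free-rank-two machinery you invoke is a collection of deep theorems (Mazur, Ribet, Wiles, Edixhoven) about localizations of Hecke algebras acting on \emph{cuspidal} cohomology at \emph{residually irreducible} maximal ideals, under additional hypotheses; it does not apply to the module at hand, which is the full space of mod-$m$ forms including its Eisenstein part. Worse, the local components on which semisimplicity actually fails are typically exactly the residually reducible ones, i.e.\ Eisenstein congruences such as $\Delta\equiv E_{12}\pmod{691}$: there the eigenform and the Eisenstein series fuse into one local component, no $GL_2(A_{\mathfrak m})$-valued lift of the pseudo-representation need exist, and there is no ``explicit Eisenstein description'' of the mixed component. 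So the case you yourself flag as delicate is precisely the case the theorem is about, and it is left unproved. (For residually irreducible components your conclusion can be reached without Gorenstein input via Carayol's lemma on pseudo-representations, but that does not help with the Eisenstein components.) The classical repair is more elementary and uniform: work in characteristic zero. The good Hecke algebra $T$ acting on $M_k(\Gamma_0(N),\psi;\mathbb{Z})\otimes\mathbb{Z}_m$ is reduced and sits with finite index in the product $\prod_i\mathcal{O}_i$ over the characteristic-zero eigensystems; choose $s$ with $m^{s-1}\prod_i\mathcal{O}_i\subseteq T$. Each eigensystem has a genuine representation $\rho_i$ over $\mathcal{O}_i$ (Deligne, Deligne--Serre, or a sum of two characters on the Eisenstein part), and $\mathrm{Tr}\,\rho_i(c)=0$ \emph{exactly} in $\mathcal{O}_i$ by Cayley--Hamilton, since $\rho_i(c)^2=1$ and $\det\rho_i(c)=-1$. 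Reducing the $\rho_i$ modulo $m^s$ gives finite image, so Chebotarev applies to the compositum of their fixed fields with $\mathbb{Q}(\zeta_{Nm^s})$; for $\mathrm{Frob}_Q\sim c$ every eigenvalue satisfies $\lambda_i(Q)\equiv0\pmod{m^s}$, hence $T(Q)\in m^s\prod_i\mathcal{O}_i\subseteq mT$, and therefore $T(Q)$ annihilates the reduction of every integral form. Replacing your third paragraph by this congruence-module argument would make the proof complete.
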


\noindent
Some application of Shimura lift genealize this result (see \cite[3.30]{ono2004web}).

\begin{theorem}
\label{Serre's theorem refined}
The set of primes $Q\equiv-1\ (\mathrm{mod}\ 4Nm)$ such that
$$f\ |\ T(Q^2)\equiv0\ (\mathrm{mod}\ m)$$

\noindent
for each $f(z)\in S_{\lambda+\frac12}(\Gamma_0(4N),\psi)_m$ has positive density, where $T(Q^2)$ denotes the usual Hecke operator acting on $S_{\lambda+\frac12}(\Gamma_0(4N),\psi)$.
\end{theorem}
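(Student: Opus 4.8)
The plan is to deduce this half-integral weight statement from the integral weight Serre theorem (Theorem~\ref{Serre's theorem}) by transporting everything through the Shimura lift, whose defining feature is that it intertwines the Hecke operators $T(Q^2)$ and $T(Q)$. We may assume $\lambda\geq1$, so that the lift is defined; the case $\lambda=0$ is vacuous, since $S_{\frac12}(\Gamma_0(4N),\psi)$ contains no nonzero cusp forms.

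For each squarefree $t\geq1$, the Niwa-Shimura theorem gives a linear map $\Sh{t}\colon S_{\lambda+\frac12}(\Gamma_0(4N),\psi)\to M_{2\lambda}(\Gamma_0(2N),\psi^2)$ whose target is the \emph{same} for every $t$. The substantive input I would use is the Hecke-equivariance furnished by Shimura's theory,
\[
\Sh{t}\bracket{f\ |\ T(Q^2)}=\Sh{t}(f)\ |\ T(Q),
\]
valid for every prime $Q$ with $\gcd(Q,4N)=1$. Reducing $q$-expansions modulo $m$, all of these maps land in the single finite-dimensional $\mathbb{F}_m$-space $W:=M_{2\lambda}(\Gamma_0(4N),\psi^2)_m$, where I have used the inclusion $M_{2\lambda}(\Gamma_0(2N),\psi^2)\subseteq M_{2\lambda}(\Gamma_0(4N),\psi^2)$ in order to work at level $4N$.

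Next I would apply Serre's theorem to $W$ at level $4N$: this produces a positive-density set $\mathcal{S}$ of primes $Q\equiv-1\ (\mathrm{mod}\ 4Nm)$ such that $g\ |\ T(Q)\equiv0\ (\mathrm{mod}\ m)$ for every $g\in W$. Any $Q\in\mathcal{S}$ satisfies $\gcd(Q,4N)=1$ and is odd, so the equivariance of the previous paragraph applies and yields $\Sh{t}(f\ |\ T(Q^2))=\Sh{t}(f)\ |\ T(Q)\equiv0\ (\mathrm{mod}\ m)$ for every $f\in S_{\lambda+\frac12}(\Gamma_0(4N),\psi)_m$ and every squarefree $t$.

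Finally I would recover the vanishing of $f\ |\ T(Q^2)$ itself from that of all its lifts. Writing $f\ |\ T(Q^2)=\sum_n c(n)q^n$, the definition of the lift gives the triangular relation $A_t(n)=\sum_{d\mid n}\psi_t(d)d^{\lambda-1}c\bracket{t(n/d)^2}$, in which the $d=1$ term is $c(tn^2)$ with coefficient $\psi_t(1)=1$. Since every $A_t(n)\equiv0\ (\mathrm{mod}\ m)$, an induction on $n$ with base case $A_t(1)=c(t)$ forces $c(tn^2)\equiv0\ (\mathrm{mod}\ m)$ for every squarefree $t$ and every $n\geq1$; as each positive integer is uniquely of the form $tn^2$, this gives $f\ |\ T(Q^2)\equiv0\ (\mathrm{mod}\ m)$, which is the assertion. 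I expect the genuine obstacle to be the Hecke-equivariance of the Shimura lift: it is the one nonformal ingredient, and its use requires that $Q$ be coprime to $4N$, which is precisely why the congruence $Q\equiv-1\ (\mathrm{mod}\ 4Nm)$ is built into the conclusion; by contrast the coefficient reconstruction is elementary and the passage to level $4N$ is a harmless inclusion.
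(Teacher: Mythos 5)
The paper offers no proof of this theorem beyond a citation to \cite{ono2004web}, and your argument is essentially the standard proof of that cited result: lift $f$ by every $\Sh{t}$, invoke the Hecke-equivariance $\Sh{t}\bracket{f\ |\ T(Q^2)}=\Sh{t}(f)\ |\ T(Q)$ for $Q\nmid 4N$, apply the integral-weight Serre theorem to the common target space, and recover each coefficient $c(tn^2)$ by induction on the triangular divisor-sum relation, using that $c(0)=0$ because $f\ |\ T(Q^2)$ is cuspidal. The one incorrect side remark is your disposal of $\lambda=0$: it is not true that $S_{\frac12}(\Gamma_0(4N),\psi)$ is always zero (for instance $\eta(24z)=\sum_{n\geq1}\bracket{\frac{12}{n}}q^{n^2}$ is a nonzero cusp form of weight $\frac12$ on $\Gamma_0(576)$), so the weight-$\frac12$ case is genuinely excluded by the need for the Shimura lift rather than vacuous; this is harmless here, since the cited source likewise assumes $\lambda\geq1$ and the paper only applies the theorem with large $\lambda$.
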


\noindent
Although the theorem states that such primes have a positive proportion in the arithmetic sequence $4Nmn-1$, when we actually find some examples of congruences, we shall test all primes.

By Theorem \ref{sum bar p(mn) q^n is a modular form}, $\sum \bar p(mn)q^n$ may be a non-cusp form. Unfortunately, we cannot directly use Theorem \ref{Serre's theorem refined} to prove the existence of congruences modulo primes $\geq 13$. However, we can still test whether $\sum \bar p(mn)q^n ,|, T(Q^2) \equiv_m 0$, but there is less chance of finding such an example.

The next theorem shows that we can transform a non-cusp form into a cusp form without changing many coefficients.

\begin{theorem}
    \label{cuspation}
    Suppose that $m\geq 3$ and $f=\sum_{n=0}^\infty a(n)q^n\in M_{\lambda+\frac12}(\Gamma_0(4N),\psi)_m$, then

    \begin{enumerate}
        \item If $m\geq5$,
        \begin{equation}
            \abovedisplayskip=1em
            \notag
            \sum_{\genfrac{}{}{0pt}{}{n=0}{\notmodulo{n}{0}{m}}}^\infty a(n)q^n\in S_{\lambda+\frac{m^2}2}\bracket{\Gamma_0\bracket{\frac{4Nm^2}{(N,m)}},\psi}_m.
            \belowdisplayskip=1em
        \end{equation}

        \item If $m=3$,
        \begin{equation}
            \abovedisplayskip=1em
            \notag
            \sum_{\genfrac{}{}{0pt}{}{n=0}{\notmodulo{n}{0}{3}}}^\infty a(n)q^n\in S_{\lambda+\frac{25}2}\bracket{\Gamma_0\bracket{\frac{36N}{(3,N)}},\psi}_3.
            \belowdisplayskip=1em
        \end{equation}
    \end{enumerate}
\end{theorem}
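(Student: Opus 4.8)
The plan is to realize the coprime-to-$m$ part of $f$ as a character twist that is visibly modular, and then to multiply it by an eta-quotient that is congruent to $1$ modulo $m$ and that vanishes at the offending cusps, so that the product is a genuine cusp form whose reduction modulo $m$ is the desired series.

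First I would write $P := \sum_{m \nmid n} a(n) q^n = f \,|\, U(m)\,|\,V(m)$ subtracted from $f$, i.e. $P = f - f\,|\,U(m)\,|\,V(m)$, which is the twist of $f$ by the principal character modulo $m$. After enlarging the level so that $m$ divides it, Proposition \ref{U,V operators for half-integral weight modular forms} shows that $f\,|\,U(m)\,|\,V(m)$, and hence $P$, is a holomorphic form of weight $\lambda+\frac12$ and character $\psi$ on $\Gamma_0\!\bracket{4Nm^2/(N,m)}$ (the extra quadratic twists $\bracket{\frac{4m}{\bullet}}$ produced by $U(m)$ and $V(m)$ cancel). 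Crucially, $P$ has no constant term, so it already vanishes at the cusp $\infty$.

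Next I would introduce the multiplier. For $m \geq 5$ set $E(z) = \eta^{m^2}(z)/\eta(m^2 z)$; by Theorem \ref{eta-quotient} this is a holomorphic form of weight $\frac{m^2-1}{2}$ on $\Gamma_0(m^2)$, and from $(1-q^n)^{m^2} \equiv 1 - q^{m^2 n}\ (\mathrm{mod}\ m)$ one gets $E \equiv 1\ (\mathrm{mod}\ m)$. Thus $PE$ has weight $\lambda + \frac{m^2}{2}$ on $\Gamma_0\!\bracket{4Nm^2/(N,m)}$ with character $\psi$, integer coefficients, and $PE \equiv P\ (\mathrm{mod}\ m)$, so $PE$ reduces exactly to $\sum_{m \nmid n} a(n) q^n$. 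For $m = 3$ the analogous weight-$4$ quotient does not vanish at enough cusps, so one instead takes a heavier eta-quotient of weight $12$ on $\Gamma_0(9)$ that is still $\equiv 1\ (\mathrm{mod}\ 3)$ (for instance a suitable power built from $\eta^9(z)/\eta(9z)$, which is $\equiv 1 \ (\mathrm{mod}\ 3)$ since $(1-q^n)^9 \equiv 1-q^{9n}$), giving weight $\lambda + \frac{25}{2}$ while leaving the level formula unchanged.

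The heart of the argument, and the step I expect to be the main obstacle, is showing that $PE$ vanishes at every cusp. Using Martin's formula (Theorem \ref{order of cusp}) one computes $\mathrm{ord}_{c/d}(E)$ and finds it strictly positive precisely when $m^2 \nmid d$; since $P$ is holomorphic, $PE$ vanishes at all such cusps. The remaining cusps are those with $m^2 \mid d$. At $\infty$ this is settled by the absence of a constant term in $P$. The delicate point is the other cusps with $m^2 \mid d$: there $E$ has order $0$, so one must show that the sieved form $P$ itself vanishes, which requires unwinding the cusp expansions of the translates $f(z + j/m)$ that make up $P$. I expect this to be exactly where the prime $m$ enters: for $m \geq 5$ the coprimality sieve forces vanishing at these cusps, whereas for $m = 3$ it does not, which is precisely why a heavier multiplier, hence weight $\frac{25}{2}$, is needed in that case. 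Once vanishing at all cusps is verified, $PE$ lies in $S_{\lambda + m^2/2}\!\bracket{\Gamma_0(4Nm^2/(N,m)),\psi}$ (respectively $S_{\lambda+25/2}\!\bracket{\Gamma_0(36N/(3,N)),\psi}$ when $m=3$), and its reduction modulo $m$ is the claimed cusp form, completing the proof.
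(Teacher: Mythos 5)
Your overall strategy coincides with the paper's: the same decomposition $P = f - f\,|\,U(m)\,|\,V(m)$, the same multiplier $\eta^{m^2}(z)/\eta(m^2z)$ for $m\geq5$ (respectively $\eta^{27}(z)/\eta^3(9z)$ for $m=3$), and the same division of labour between the two factors over the two classes of cusps. However, there is a genuine gap at exactly the step you yourself flag as the main obstacle: you never prove that $P$ vanishes at the cusps $a/(cm^2)$ with $m^2$ dividing the denominator, beyond the cusp $\infty$. Observing that $P$ has no constant term only handles $\infty$; at the other cusps in this class the expansion of $P$ is controlled by the expansions of the translates $f\bigl(z+\tfrac{j}{m}\bigr)$, and the required vanishing is a nontrivial assertion. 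The paper closes this by invoking the proof of Treneer's Proposition 3.5, which shows that if $f\,|\,U(m^t)$ is holomorphic at all cusps $a/(cm^2)$ (here one takes $t=0$, since $f$ is holomorphic everywhere), then $f\,|\,U(m^t) - f\,|\,U(m^{t+1})\,|\,V(m)$ actually \emph{vanishes} at all such cusps. Without this input, or an equivalent computation, your argument does not go through.

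A second, related error: in your final paragraph you attribute the weight jump to $\lambda+\tfrac{25}{2}$ for $m=3$ to a supposed failure of the sieve to force vanishing of $P$ at the cusps with $9\mid d$. That is not the reason, and it contradicts your own (correct) earlier remark. The vanishing of $P$ at cusps $a/(cm^2)$ holds uniformly for all $m\geq3$; the $m=3$ exception lives entirely on the eta-quotient side. Indeed, $\eta^{9}(z)/\eta(9z)$ fails condition (ii) of Theorem \ref{eta-quotient} ($\sum_{\delta\mid 9} 9r_\delta/\delta = 80 \not\equiv 0 \pmod{24}$), so it is not an admissible eta-quotient on $\Gamma_0(9)$; one must pass to its cube $\eta^{27}(z)/\eta^3(9z)$ of weight $12$, which accounts for $\tfrac{1}{2}+12=\tfrac{25}{2}$. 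For $m\geq5$ the analogous sums are $m^4-1\equiv 0\pmod{24}$, so no such adjustment is needed.
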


\begin{proof}
    According to proof of \cite[Prop. 3.5.]{treneer2006congruences}, we know that if there are integer $t$ such that
    \begin{equation}
        \abovedisplayskip=1em
        \notag
        \sum_{n=0}^\infty a(n)q^n\ |\ U(m^t)
        \belowdisplayskip=1em
    \end{equation}

    \noindent
    is holomorphic at all cusps $\frac{a}{cm^2}$, then
    \begin{equation}
        \abovedisplayskip=1em
        \notag
        \sum_{n=0}^\infty a(n)q^n\ |\ U(m^t)-\sum_{n=0}^\infty a(n)q^n\ |\ U(m^{t+1})\ |\ V(m)
        \belowdisplayskip=1em
    \end{equation}

    \noindent
    vanishes at all cusps $\frac{a}{cm^2}$. Since $f$ itself is holomorphic at all cusps, we can simply take $t=0$. Now
    \begin{equation}
        \abovedisplayskip=1em
        \label{vanishes at a/cm^2}
        \begin{aligned}
            \sum_{\genfrac{}{}{0pt}{}{n=0}{\notmodulo{n}{0}{m}}}^\infty a(n)q^n
            &=\sum_{n=0}^\infty a(n)q^n-\sum_{n=0}^\infty a(n)q^n\ |\ U(m)\ |\ V(m)\\
            &\in M_{\lambda+\frac{1}2}\bracket{\Gamma_0\bracket{\frac{4Nm^2}{(N,m)}},\psi}_m
        \end{aligned}
        \belowdisplayskip=1em
    \end{equation}

    \noindent
    vanishes at all cusps $\frac{a}{cm^2}$. By \cite[page 13]{treneer2006congruences} we know
    \begin{equation}
        \abovedisplayskip=1em
        \label{vanishes at a/c with m^2 nmid c}
        \begin{cases}
             1\equiv_m\frac{\eta^{m^2}(z)}{\eta(m^2z)}\in M_{\frac{m^2-1}{2}}(\Gamma_0(m^2)) & \text{ if } m\geq5, \\
             1\equiv_3\frac{\eta^{27}(z)}{\eta^3(9z)}\in M_{12}(\Gamma_0(9)) & \text{ if } m=3.
        \end{cases}
        \belowdisplayskip=1em
    \end{equation}

    \noindent
    vanishes at all cusps $\frac{a}{c}$ of $\Gamma_0(Nm^2)$ with $m^2\nmid c$. Multiplying the two modular form in \eqref{vanishes at a/cm^2} and \eqref{vanishes at a/c with m^2 nmid c} gives the desired result.
\end{proof}

\begin{proof}[Proof of Theorem \ref{modulo large primes}]
    It is a corollary of Theorem \ref{sum bar p(mn) q^n is a modular form}, \ref{Serre's theorem refined}, and \ref{cuspation}. 
\end{proof}

\noindent
While Ramanujan-type congruences modulo all primes $m$ do exist, it is important to note that discovering them may require extensive computations. We encourage interested readers to explore and seek examples of congruences modulo primes $\geq 13$.

It is natural to ask whether only primes $3,5,7,11$ are special. In fact, we may conjecture that

\begin{conjecture}
    Let $m$ be an odd prime. If for all odd primes $\modulo{Q}{-1}{m}$, we have
    \begin{equation}
        \abovedisplayskip=2pt
        \notag
        \modulo{\bar p(mQ^3n)}{0}{m}
        \belowdisplayskip=2pt
    \end{equation}

    \noindent
    for all $n$ coprime to $Q$, then $m=3,5,7,11$.
\end{conjecture}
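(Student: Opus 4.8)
The plan is to reformulate the universal congruence for a fixed odd prime $m$ as a property of a single half-integral weight form modulo $m$, and then to argue that this property is exactly the vanishing of a cuspidal component, which should persist only up to $m=11$. First I would extend the computations behind Theorems \ref{Mod 7 congruences} and \ref{Mod 11 congruences} to general $m$. Writing $G_m=\sum_{n=0}^\infty(-1)^n\bar p(mn)q^n$, Theorem \ref{sum bar p(mn) q^n is a modular form} together with the twisting identity gives $G_m\in M_{(8+m')\frac{m-1}2-\frac12}(\Gamma_0(32))_m$. Since $8+m'$ is odd, $(8+m')\frac{m-1}2-1\equiv\frac{m-3}2\pmod{m-1}$, so using $\modulo{E_{m-1}}{1}{m}$ to descend the weight (and Sturm's Theorem \ref{Sturm's theorem} to certify the resulting congruence, as in the two proven cases) I expect $G_m$ to be congruent modulo $m$ to a form $H_m\in M_{(m-2)/2}(\Gamma_0(4))_m$ with leading term $\phi^{m-2}(q)$; this specializes to $\phi^5(q)-2\phi_{1,4}(q)$ for $m=7$ and $\phi^9(q)-2\phi_{5,4}(q)$ for $m=11$. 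By Theorem \ref{basis of half-integral weight and level 4}, $H_m$ lives in a space of dimension $\floor{(m-3)/4}+1$, on which $T(Q^2)$ acts by an explicit matrix (Theorem \ref{Hecke operator act on M_9/2} being the case $m=11$). Establishing this level-$4$, weight-$(m-2)/2$ reduction uniformly in $m$ is the first genuine task.

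Next I would record the dictionary between the congruence and $H_m$. Exactly as in the proofs of Theorems \ref{Mod 7 congruences} and \ref{Mod 11 congruences}, for $(n,Q)=1$ the coefficient of $q^{Qn}$ in $H_m\ |\ T(Q^2)$ equals $\pm\bar p(mQ^3n)$ modulo $m$; hence the hypothesis of the Conjecture is equivalent to the vanishing of all such coefficients for every prime $\modulo{Q}{-1}{m}$. The decisive structural point, visible in Cooper's matrix, is that the Eisenstein subspace of $M_{(m-2)/2}(\Gamma_0(4))$ is an eigenspace for $T(Q^2)$ with eigenvalue $Q^{2\lambda_m-1}+1=Q^{m-4}+1$, where $\lambda_m=\frac{m-3}2$. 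Because $m-4$ is odd, $\modulo{Q^{m-4}+1}{0}{m}$ whenever $\modulo{Q}{-1}{m}$; so any Eisenstein form automatically satisfies the congruence. I would therefore aim to prove the contrapositive: \emph{if the universal congruence holds, then $H_m$ is purely Eisenstein modulo $m$}, i.e.\ a nonzero cuspidal component forces a counterexample.

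To carry this out, decompose $H_m\equiv H_m^{\mathrm{Eis}}+H_m^{\mathrm{cusp}}\pmod m$ along the Eisenstein and cuspidal subspaces. The Eisenstein part always satisfies the congruence, so everything reduces to understanding $H_m^{\mathrm{cusp}}$. Its Niwa–Shimura lift is a cusp form of integral weight $m-3$ on $\Gamma_0(2)$, and the action of $T(Q^2)$ on $H_m^{\mathrm{cusp}}$ is governed by the integral-weight eigenvalues $a_Q$ of that lift. If $H_m^{\mathrm{cusp}}\not\equiv0\pmod m$ and the attached residual Galois representation is irreducible, a Chebotarev equidistribution argument in the spirit of Theorems \ref{Serre's theorem} and \ref{Serre's theorem refined} produces a positive density of primes $\modulo{Q}{-1}{m}$ with $\notmodulo{a_Q}{0}{m}$, hence a nonvanishing $\bar p(mQ^3n)$, contradicting the hypothesis. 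The cusp space $S_{(m-2)/2}(\Gamma_0(4))$ first becomes nonzero at $m=11$ (its Shimura image lies in $S_{m-3}(\Gamma_0(2))$ and $S_8(\Gamma_0(2))\neq0$), yet $H_{11}$ lands in the Eisenstein subspace; the content of the Conjecture is precisely that from $m=13$ on the cuspidal projection no longer vanishes, consistent with the witness $\modulo{\bar p(13\cdot103^3\cdot3)}{12}{13}$.

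The main obstacle is twofold and is exactly what the finite Sturm-type computations of the $m\le11$ cases cannot reach. First, one must show, uniformly over all primes $m\ge13$, that the cuspidal projection of $H_m$ is genuinely nonzero modulo $m$; since the weight $(m-2)/2$ grows with $m$, no single space or computation suffices. Second, and more seriously, one must rule out \emph{hidden Eisenstein congruences}: if for some $m\ge13$ the cusp eigenvalues satisfied $\modulo{a_Q}{Q^{j}+1}{m}$ for an odd exponent $j$ (a phenomenon tied to numerators of Bernoulli numbers and Mazur's Eisenstein ideal), then $a_Q$ would again vanish for all $\modulo{Q}{-1}{m}$ and the universal congruence would survive despite a nonzero cuspidal part. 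Excluding such congruences for the specific family $H_m$ at every prime $m\ge13$ is the crux, and is the reason the statement is posed as a conjecture rather than proved.
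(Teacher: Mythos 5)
The statement you are addressing is posed in the paper as a \emph{conjecture}; the paper contains no proof of it, so there is nothing to compare your attempt against except the paper's general methods. Your write-up is a research programme, not a proof, and to your credit you say so explicitly in the final paragraph: the two obstacles you name (proving the cuspidal projection of $H_m$ is nonzero modulo $m$ uniformly for $m\ge 13$, and excluding Eisenstein-type congruences $a_Q\equiv Q^j+1\pmod m$ on that cuspidal part) are genuine and unresolved, and either one alone prevents this from being a proof. The outline itself is consistent with how the paper handles $m=7$ and $m=11$, and your observation that the Eisenstein eigenvalue $Q^{m-4}+1$ vanishes modulo $m$ for $Q\equiv-1\pmod m$ correctly explains why the proven cases work.

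Beyond the gaps you acknowledge, two further steps are asserted more confidently than they deserve. First, the reduction of $G_m$ modulo $m$ to a form $H_m$ of weight $(m-2)/2$ on $\Gamma_0(4)$ is not automatic: the weight difference $\frac{7+m'}{2}(m-1)$ is indeed a multiple of $m-1$, but the filtration of $G_m$ modulo $m$ need not descend all the way to $(m-2)/2$; the paper establishes this only for $m=7,11$ by exhibiting a candidate and checking a Sturm bound, and there is no uniform argument that such a low-weight representative exists for all $m$. Second, even granting a nonzero cuspidal component with Hecke eigenvalues $\lambda_Q\not\equiv0\pmod m$ for a positive density of $Q\equiv-1\pmod m$, the relation $a(Q^3n)=\lambda_Q\,a(Qn)$ (for $(n,Q)=1$) only produces a counterexample to the congruence if some coefficient $a(M)$ with $Q\,\|\,M$ is nonzero modulo $m$; converting eigenvalue nonvanishing into coefficient nonvanishing is an additional step your sketch does not address. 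So the proposal should be read as a plausible heuristic for why the conjecture ought to be true, matching the paper's evidence (e.g.\ the witness $\bar p(13\cdot103^3\cdot3)\equiv12\pmod{13}$), rather than as a proof or partial proof.
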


\section*{Acknowledgement}

We would like to express our gratitude to OEIS for providing many valuable references.

\begin{comment}

\section*{Declarations}

Conflict of interest statement: We declare that we do not have any commercial or associative interest that represents a conflict of interest in connection with the work submitted.

Data availability statement: Not applicable.

\end{comment}

% \printbibliography

\end{document}